\theoremstyle{ams}
\newtheorem{thm}{Theorem}[section]
\newtheorem{lemma}[thm]{Lemma}
\newtheorem{prop}[thm]{Proposition}
\newtheorem{rem}{Remark}
\newcommand{\Ker}{\mathop{\rm Ker}\nolimits}
\newcommand{\im}{\mathop{\rm Im}\nolimits}
\title{The field of meromorphic functions on a sigma divisor of a hyperelliptic curve of genus 3 and applications\footnote{This is the accepted version. 
The final publication is available at link.springer.com}}
\author{T. Ayano\footnote{E-mail: tayano7150@gmail.com}\;\; and V. M. Buchstaber}
\date{}
\begin{document}
\maketitle

\begin{abstract}
The field of meromorphic functions on a sigma divisor of a hyperelliptic curve of genus $3$ is described in terms of the gradient of it's sigma function.
Solutions of corresponding families of polynomial dynamical systems in $\mathbb{C}^4$ with two polynomial integrals are constructed as an application.
These systems were introduced in the work of V. M. Buchstaber and A. V. Mikhailov on the basis of commuting vector fields on the symmetric square of algebraic curves.
\end{abstract}

\section{Introduction}
The Weierstrass elliptic sigma function possesses the following
fundamental properties:

1. Any elliptic function, that is meromorphic
doubly periodic function on $\mathbb{C}$, can be written in the form
$P_1(\wp(u)) + P_2(\wp(u))\wp(u)'$, where $\wp(u)$, $\wp'(u)$ are
the second and third logarithmic derivatives of $\sigma(u)$ and
$P_1(\wp(u)), P_2(\wp(u))$ are rational functions of the Weierstrass function
$\wp(u)$.

2. The functions $\wp(u), \wp(u)'$ and $\wp(u)''$  realize the parametric family of
elliptic curves in the standard Weierstrass model in the form of a cubic surface in $\mathbb{C}^3$.

K. Weierstrass proposed an approach to the construction of Abelian functions on
Jacobians of plane algebraic curves, based on special models of these curves.
F. Klein put forward the program for constructing sigma functions of many variables,
possessing analogues of the properties 1 and 2.
A big contribution to this program was made by H. Baker in his papers, where
for the hyperelliptic curves of genera 2 and 3 he obtained explicit expressions for the
higher logarithmic derivatives of sigma functions of many variables
in the form of polynomials in
logarithmic derivatives of the second and the third order of these functions.
Relatively recently it was shown that these differential polynomials give the
fundamental equations of mathematical physics, including Korteweg--de Vries and
Kadomtzev--Petviashvili equations.
Thus $2g$-periodic meromorphic functions on $\mathbb{C}^g$ which
explicitly give solutions to important equations have been obtained. 
This attracted attention to the theory and applications of sigma functions of many variables.
Variants of this theory with emphasis on different aspects and directions of applications,
can be found in \cite{BEL-97-1}, \cite{O-98}, \cite{O-05} and monography \cite{B2}.
In \cite{BEL-97-1} and in subsequent papers of V.~M.~Buchstaber, D.~V.~Leikin and
V.~Z.~Enolskii for the hyperelliptic curves of genus $g$, it was constructed the theory of sigma functions
$\sigma (w_1, ... ,w_{2g-1})$  as entire functions that are graded homogeneous with respect to variables $w_1, ... ,w_{2g-1}$ and parameters $y_4,...,y_{4g+2}$.
They developed a method to obtain expressions for higher 
logarithmic derivatives of sigma functions in the form of polynomials in
logarithmic derivatives of the second and the third order of these functions.
In terms of $g$-vector functions, which are analogues of the functions $\wp(w),\, \wp(w)',\, \wp(w)''$, 
they obtained the realization of the Jacobian family of hyperelliptic curves of genus $g$ in coordinates 
$w_1, ... ,w_{2g-1}$ in the form of the intersection of cubic surfaces in $\mathbb{C}^{3g}$.
In \cite{BL-08} a general approach to construct Lie algebras of differentiation of hyperelliptic
functions over parameters of the curve was developed.
These Lie algebras were described in an explicit form in \cite{B1} in the case of genus 2 and in \cite{E} in the case of genus 3.

The surface determined by the equation $\sigma(w_1,w_3, ... ,w_{2g-1})=0$ in the Jacobian of a hyperelliptic curve of genus $g$ 
is called the sigma-divisor $(\sigma)$ of this curve.
The set of points of the sigma-divisor $(\sigma)$, in which all the first derivatives $(\sigma_1,\sigma_3,\dots,\sigma_{2g-1}), \; \sigma_k=\frac{\partial}{\partial w_k}\sigma$, $k=2l-1, l=1,\ldots,g$, are zero,
is denoted by $(\sigma)_{\mbox{sing}}$.
The description of the sets $(\sigma)_{\mbox{sing}}$ can be found in \cite{B2} and \cite{F}.
They are subsets of dimension $g-3$ in the Jacobian $\mbox{Jac}(V)$ of the hyperelliptic curve of genus $g>3$,
the empty set for $g=2$, and the point $(0)$ for $g=3$.
Thus, in the case $g=3$ the sigma-divisor $(\sigma)$ has the form $W/\Lambda$, where $W$ is the complex two-dimensional 6-periodic analytic surface in $\mathbb{C}^3$ whose set of singular points coincides with the lattice $\Lambda$ of periods of differentials on this curve.

Let $\mathcal{F}((\sigma))$ be the field of meromorphic functions on the sigma-divisor of the
hyperelliptic curve of genus $3$. The functions $f\in\mathcal{F}((\sigma))$ are considered
as meromorphic functions on $\mathbb{C}^{3}$ whose restrictions to the sigma-divisor ($\sigma$) are 6-periodic.
In this paper a description of the field  $\mathcal{F}((\sigma))$ is  obtained in terms of the gradient of the sigma function.
It essentially uses the operators $L_{k,\ell}$=$\partial_k-\frac{\sigma_k}{\sigma_{\ell}}\partial_{\ell}$
that map the field $\mathcal{F}((\sigma))$ into itself, where $k,\ell=1,3,5$, $k\neq\ell$, and $\partial_k=\frac{\partial}{\partial w_k}$.
As differentiations of the field of meromorphic functions on $\mathbb{C}^3$, they satisfy the commutation relations
$[L_{k_1,\ell},L_{k_2,\ell}]=0$.

As an application we construct solutions of the corresponding families of polynomial dynamical systems on $\mathbb{C}^4$ with two polynomial integrals.
These systems were introduced in \cite{B} on the basis of commuting vector fields on the symmetric squares of plane algebraic curves.
The general approach, developed in \cite{B}, allows us to obtain such systems for more
general models of these curves.

Results and methods of the theory of hyperellitic functions as well as achievements of
the modern singularity theory stimulated the construction and development of applications of
the theory of Abelian functions on Jacobians of algebraic curves of more general models.
A number of results and references to works in this direction can be found in the monograph \cite{B2}.
We also mention the papers \cite{N-10} and \cite{KMP}.

\section{Sigma function}
Set
\[
\Delta=\{(y_4,y_6,y_8,y_{10},y_{12},y_{14})\in\mathbb{C}^6\;|\;\mbox{$Q(X)$\;has a multiple roots}\},
\]
where
\[
Q(X)=X^7+y_4X^5-y_6X^4+y_8X^3-y_{10}X^2+y_{12}X-y_{14},
\]
and $B=\mathbb{C}^6\backslash\Delta$.
Consider the non-singular hyperelliptic curve of genus $3$
\begin{equation}\label{k}
V_{\bf y}=\{(X,Y)\in\mathbb{C}^2\;|\;Y^2=Q(X)\},
\end{equation}
where ${\bf y}=(y_4,y_6,y_8,y_{10},y_{12},y_{14})\in B$.
In this paragraph we recall the definition of the sigma-function for the curve $V_{\bf y}$ (see \cite{B2})
and give the facts about it, which will be used later. For $(X,Y)\in V_{\bf y}$ let
\[
du_1=-\frac{X^2}{2Y}dX,\quad du_3=-\frac{X}{2Y}dX,\quad du_5=-\frac{1}{2Y}dX
\]
be a basis of the vector space of holomorphic 1-forms on $V_{\bf y}$ and $du={}^t(du_1,du_3,du_5)$. Further, let
\[
dr_1=-\frac{X^3}{2Y}dX,\quad dr_3=-\frac{y_4X^2+3X^4}{2Y}dX,\quad dr_5=-\frac{y_8X-2y_6X^2+3y_4X^3+5X^5}{2Y}dX
\]
be a basis of the vector space of meromorphic 1-forms on $V_{\bf y}$ with a pole only at $\infty$.
Let $\{\alpha_i,\beta_i\}_{i=1}^3$ be a canonical basis in the one-dimensional homology group of the curve $V_{\bf y}$.
We define the matrices of periods by
\[
2\omega_1=\left(\int_{\alpha_j}du_{i}\right), \;2\omega_2=\left(\int_{\beta_j}du_{i}\right),\;-2\eta_1=\left(\int_{\alpha_j}dr_i\right), \;-2\eta_2=\left(\int_{\beta_j}dr_i\right).
\]
The normalized matrix of periods has the form $\tau=\omega_1^{-1}\omega_2$.
Let $\delta=\tau\delta'+\delta'',\; \delta',\delta''\in\mathbb{R}^3,$ be the vectors of Riemann's
constants with respect to the choice $(\{\alpha_i,\beta_i\},\infty)$ and $\delta:={}^t({}^t\delta',{}^t\delta'')$.
Then we have $\delta={}^t(\frac{1}{2},\frac{1}{2},\frac{1}{2},\frac{1}{2},0,\frac{1}{2})$.
The sigma-function $\sigma(w)$, $w={}^t(w_1,w_3,w_5)\in \mathbb{C}^3$, is defined by
\[
\sigma(w)=C\exp\left(\frac{1}{2}{}^tw\eta_1\omega_1^{-1}w\right)
\theta[\delta]\bigl((2\omega_1)^{-1}w,\tau\bigr),
\]
where $\theta[\delta](w)$ is the Riemann's theta-function with the characteristics $\delta$, defined by
\[
\theta[\delta](w)=\sum_{n\in\mathbb{Z}^3}\exp(\pi\sqrt{-1}\;{}^t(n+\delta')\tau(n+\delta')+
2\pi\sqrt{-1}\;{}^t(n+\delta')(w+\delta'')),
\]
and $C$ is a constant.
Set $\sigma_i=\partial_i\sigma$ and $\sigma_{i,j}=\partial_i\partial_j\sigma$, where
$\partial_i=\partial/\partial w_i$.
We define the period lattice $\Lambda=\{2\omega_1m_1+2\omega_2m_2\;|\;m_1,m_2\in\mathbb{Z}^3\}$
and set $W=\{w\in\mathbb{C}^3\;|\;\sigma(w)=0\}$.

\begin{prop} {\rm (see \cite{B2} theorem 1.1 and \cite{N-10} p.193)}\label{period}
For $m_1,m_2\in\mathbb{Z}^3$, set $\Omega=2\omega_1m_1+2\omega_2m_2$ and
\[
A=(-1)^{2({}^t\delta'm_1-{}^t\delta''m_2)+{}^tm_1m_2}\exp({}^t(2\eta_1m_1+
2\eta_2m_2)(w+\omega_1m_1+\omega_2m_2)).
\]
Then:

{\rm (i)} $\sigma(w+\Omega)=A\sigma(w)$, where $w\in\mathbb{C}^3$.

{\rm (ii)} $\sigma_i(w+\Omega)=A\sigma_i(w),\;\;i=1,3,5$, where $w\in W$.
\end{prop}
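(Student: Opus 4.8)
The plan is to read (i) as the quasi-periodicity (automorphy) law of the entire function $\sigma$ and then to deduce (ii) from (i) by a single differentiation followed by restriction to $W$.

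First I would establish (i) directly from the definition
$$\sigma(w)=C\exp\left(\tfrac12{}^tw\eta_1\omega_1^{-1}w\right)\theta[\delta]\bigl((2\omega_1)^{-1}w,\tau\bigr),$$
by tracking the effect of the translation $w\mapsto w+\Omega$ with $\Omega=2\omega_1m_1+2\omega_2m_2$. The argument of the theta-function is shifted by $(2\omega_1)^{-1}\Omega=m_1+\tau m_2$ (using $\tau=\omega_1^{-1}\omega_2$), so the standard transformation law of $\theta[\delta]$ under a lattice vector of the form ``integer plus $\tau\cdot$integer'' produces the sign character $(-1)^{2({}^t\delta'm_1-{}^t\delta''m_2)+{}^tm_1m_2}$ together with an exponential that is linear in $(2\omega_1)^{-1}w$ and quadratic in $m_2$. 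Multiplying by the change of the Gaussian prefactor $\exp(\tfrac12{}^tw\eta_1\omega_1^{-1}w)$ and invoking the Legendre (generalized Riemann bilinear) relation among $\omega_1,\omega_2,\eta_1,\eta_2$ makes the terms quadratic in $w$ cancel, and what survives is exactly the factor $A$ of the statement. This bookkeeping is the only genuinely computational part of the argument and is the main obstacle; it is precisely the content recalled from \cite{B2} and \cite{N-10}.

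For (ii) the argument is immediate. Since $\sigma$ is entire, the identity $\sigma(w+\Omega)=A\sigma(w)$ of (i) holds on all of $\mathbb{C}^3$ and may be differentiated in $w_i$, $i=1,3,5$. As $\Omega$ is constant, the chain rule gives $\partial_i[\sigma(w+\Omega)]=\sigma_i(w+\Omega)$, while the product rule on the right-hand side gives
$$\sigma_i(w+\Omega)=(\partial_iA)\,\sigma(w)+A\,\sigma_i(w).$$
I would then restrict to $w\in W$, where $\sigma(w)=0$ by the definition of the sigma-divisor; the first summand vanishes and we are left with $\sigma_i(w+\Omega)=A\,\sigma_i(w)$ for $i=1,3,5$, which is (ii). No subtlety hides in the restriction: $A$ is an exponential times a sign and hence never zero, so the relation does not degenerate on $W$.
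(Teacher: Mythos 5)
The paper gives no proof of this proposition at all: it is imported by citation from \cite{B2} (Theorem 1.1) and \cite{N-10} (p.~193), so there is no internal argument to compare against. Judged on its own terms, your proposal is sound. The derivation of (ii) from (i) --- differentiate the entire-function identity $\sigma(w+\Omega)=A\sigma(w)$ in $w_i$, obtaining $\sigma_i(w+\Omega)=(\partial_i A)\sigma(w)+A\sigma_i(w)$, and then restrict to $w\in W$ where $\sigma(w)=0$ kills the first term --- is complete, correct, and is the standard way this is done. Your part (i) is a plan rather than a proof: you correctly identify the shift of the theta argument by $(2\omega_1)^{-1}\Omega=m_1+\tau m_2$ and the two ingredients (the transformation law of $\theta[\delta]$ under lattice translations and the generalized Legendre relations), but the actual bookkeeping is deferred to the very references the paper cites; since the paper does exactly the same, this is an acceptable resolution. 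One small technical inaccuracy worth fixing: the terms quadratic in $w$ coming from the prefactor $\exp(\frac12{}^tw\eta_1\omega_1^{-1}w)$ cancel identically in the ratio $\sigma(w+\Omega)/\sigma(w)$, with no relation needed; the generalized Legendre relations are needed instead to recombine the \emph{linear-in-$w$} and constant terms (those produced by the theta law, involving $\tau$ and $(2\omega_1)^{-1}$, together with the cross terms of the Gaussian) into the stated factor $\exp\bigl({}^t(2\eta_1m_1+2\eta_2m_2)(w+\omega_1m_1+\omega_2m_2)\bigr)$.
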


From proposition \ref{period}(i) it follows that $w+\Omega\in W$ for any $w\in W$ and
$\Omega\in\Lambda$. The surface
\[
(\sigma)=\{w\in\mathbb{C}^3/\Lambda\;|\;\sigma(w)=0\}
\]
is called the sigma-divisor $(\sigma)$. We set $\deg w_{2k-1}=-(2k-1)$ and $\deg y_{2i}=2i$, where $k=1,2,3$ and $i=2,\ldots,7$.

\begin{thm} {\rm (see \cite{B2} theorem 7.7, \cite{N-10} theorem 3)} \label{rationallim}
The sigma-function $\sigma(w)=\sigma(w;{\bf y})$ is an entire function on $\mathbb{C}^3$ and is given by the series
\[
\sigma(w)=w_1w_5-w_3^2-\frac{1}{3}w_1^3w_3+\frac{1}{45}w_1^6+
\sum_{i_1+3i_3+5i_5>6}\lambda_{i_1,i_2,i_3}w_1^{i_1}w_3^{i_3}w_5^{i_5},
\]
where the coefficients $\lambda_{i_1,i_2,i_3}\in\mathbb{Q}[y_4,y_6,y_8,y_{10},y_{12},y_{14}]$ are homogeneous polynomials
of degree $i_1+3i_3+5i_5-6$ if $\lambda_{i_1,i_2,i_3}\neq0$.
\end{thm}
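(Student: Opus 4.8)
The plan is to combine three ingredients: the entireness of the theta representation, a graded $\mathbb{C}^*$-homogeneity of $\sigma$, and the identification of the lowest-weight part with a Schur--Weierstrass polynomial. First I would record that $\sigma(w)$ is entire: the Riemann theta function $\theta[\delta](\cdot,\tau)$ is entire in its first argument, the prefactor $\exp(\tfrac12\,{}^tw\,\eta_1\omega_1^{-1}w)$ is entire, and $w\mapsto(2\omega_1)^{-1}w$ is a linear map (with $\omega_1$ nondegenerate), so the composite is entire on $\mathbb{C}^3$. Hence $\sigma$ admits a Taylor expansion at $w=0$ of the form $\sigma(w)=\sum c_{i_1,i_3,i_5}(\mathbf{y})\,w_1^{i_1}w_3^{i_3}w_5^{i_5}$ with coefficients holomorphic in $\mathbf{y}\in B$, and the whole task is to pin these coefficients down. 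Next I would exploit the $\mathbb{C}^*$-action $(X,Y)\mapsto(t^2X,t^7Y)$, under which $V_{\mathbf{y}}$ is isomorphic to a curve with rescaled parameters; the holomorphic differentials $du_{2k-1}$ acquire weight $-(2k-1)$, so the period matrices $\omega_1,\omega_2,\eta_1,\eta_2$ rescale accordingly while the characteristic $\delta$ is unchanged. Choosing the constant $C$ homogeneously, one checks that $\sigma$ is weighted-homogeneous of weight $-6$ for the weights $\mathrm{wt}(w_{2k-1})=-(2k-1)$, $\mathrm{wt}(y_{2i})=2i$. Feeding this back into the expansion forces each $c_{i_1,i_3,i_5}$ to be weighted-homogeneous on $B$ of degree $i_1+3i_3+5i_5-6$.

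The remaining and hardest point is to show that these coefficients, a priori only holomorphic on $B=\mathbb{C}^6\setminus\Delta$, are in fact polynomials in $\mathbf{y}$; equivalently, that they extend holomorphically across the discriminant $\Delta$. A weighted-homogeneous function of nonnegative integer degree that is holomorphic on all of $\mathbb{C}^6$ is automatically a weighted-homogeneous polynomial, so everything reduces to holomorphic extendability in $\mathbf{y}$. I would establish this via the Baker (heat-type) system: $\sigma$ satisfies a holonomic system of linear PDEs in $w$ and in the parameters $y_{2i}$ whose coefficients are polynomial in $\mathbf{y}$; expanding in powers of $w$ turns this into a recursion that determines each $c_{i_1,i_3,i_5}$ from lower-weight coefficients by polynomial operations. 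Induction on the weight $i_1+3i_3+5i_5$ then yields that every coefficient is a polynomial over $\mathbb{Q}$. Once polynomiality is known, homogeneity instantly organises the expansion: a weighted-homogeneous polynomial of negative degree vanishes, so the coefficients with $i_1+3i_3+5i_5<6$ are zero and the series begins in weight $6$; and those with $i_1+3i_3+5i_5=6$ are weighted-homogeneous of degree $0$, hence constants independent of $\mathbf{y}$. (Alternatively, Nakayashiki's algebraic construction of $\sigma$ in \cite{N-10} realises the same coefficients as manifestly polynomial expressions.)

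To evaluate the weight-$6$ constants I would pass to the rational limit $\mathbf{y}\to0$, where $V_{\mathbf{y}}$ degenerates to $Y^2=X^7$. In the local coordinate $\xi$ at the Weierstrass point one has $u_1=\xi$, $u_3=\tfrac13\xi^3$, $u_5=\tfrac15\xi^5$, so summing over the $g=3$ sheets identifies $w_{2k-1}$ with the odd power sums through $p_{2k-1}=(2k-1)w_{2k-1}$ and $p_2=p_4=0$. The limiting $\sigma$ is the Schur--Weierstrass polynomial attached to the gap sequence $\{1,3,5\}$, namely the Schur function $s_\lambda$ for the partition $\lambda=(3,2,1)$ obtained from the gaps by $\lambda_i=\ell_{g+1-i}-(g-i)$. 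Expanding $s_{(3,2,1)}=h_1h_2h_3-h_3^2-h_1^2h_4+h_1h_5$ by Jacobi--Trudi, rewriting each complete symmetric function $h_k$ in power sums, and substituting $p_{2k-1}=(2k-1)w_{2k-1}$, $p_2=p_4=0$, I get exactly $w_1w_5-w_3^2-\tfrac13w_1^3w_3+\tfrac1{45}w_1^6$ (I have verified the four rational coefficients, in particular the cancellation $\tfrac1{12}-\tfrac1{36}-\tfrac1{24}+\tfrac1{120}=\tfrac1{45}$). This supplies the leading terms of the statement, while the induction above furnishes the higher coefficients together with their asserted homogeneous degrees.

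I expect the main obstacle to be the polynomiality/extension step of the second paragraph: the entireness is immediate from theta theory, the homogeneity is a bookkeeping computation once the $\mathbb{C}^*$-action is fixed, and the Schur leading term is an explicit finite calculation. What genuinely requires work is ruling out poles of the coefficients along $\Delta$, i.e. controlling $\sigma$ as the curve acquires a multiple root; carrying the heat-equation recursion through while tracking that the base data are polynomial is where the real content of the theorem lies.
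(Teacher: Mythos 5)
The first thing to note is that the paper contains no proof of this statement: Theorem \ref{rationallim} is imported from the literature (\cite{B2}, Theorem 7.7, and \cite{N-10}, Theorem 3), so there is no internal argument to compare yours against; the only possible comparison is with the strategy of those references. Your outline is in fact a faithful reconstruction of that strategy: entireness from the theta representation, weighted homogeneity of degree $-6$ from the $\mathbb{C}^*$-action $(X,Y)\mapsto(t^2X,t^7Y)$ (your weight bookkeeping for $du_{2k-1}$ and $y_{2i}$ is correct), reduction of polynomiality to holomorphic extension across $\Delta$, and identification of the weight-$6$ part with the Schur--Weierstrass polynomial of the partition $(3,2,1)$. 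Your leading-term computation checks out: with $p_2=p_4=0$ one gets $s_{(3,2,1)}=\frac{1}{45}p_1^6-\frac{1}{9}p_1^3p_3-\frac{1}{9}p_3^2+\frac{1}{5}p_1p_5$, which under $p_{2k-1}=(2k-1)w_{2k-1}$ is exactly $w_1w_5-w_3^2-\frac{1}{3}w_1^3w_3+\frac{1}{45}w_1^6$.

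However, as a proof the proposal has a genuine gap exactly where you say the content lies, and it is not a small one. The ``Baker heat-type holonomic system'' is invoked but never exhibited: to run your induction you must (a) write down the operators annihilating $\sigma$, (b) check that their coefficients are polynomial in $\mathbf{y}$ (this is the Buchstaber--Leikin construction of vector fields tangent to the discriminant and the associated second-order operators in $w$, which is itself a substantial piece of work), (c) verify that the resulting recursion on weighted degree is triangular, i.e.\ actually determines each coefficient from lower-weight data, and (d) supply the initial condition. Point (d) interacts badly with your third paragraph: you evaluate the weight-$6$ constants by ``passing to the rational limit,'' but at $\mathbf{y}=0$ the curve $Y^2=X^7$ is singular, has no period lattice, and the theta-function definition of $\sigma$ degenerates, so the assertion that the limit exists and equals the Schur--Weierstrass polynomial is itself a theorem (\cite{BEL-99-R}) rather than an ``explicit finite calculation''; in the heat-equation approach the Schur polynomial enters as the prescribed initial datum of the recursion, not as a limit evaluated afterwards. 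So either you cite \cite{BEL-99-R} together with the heat-equation machinery --- at which point your argument is a correct roadmap through the literature, on the same footing as the paper's own citation of \cite{B2} and \cite{N-10} --- or you owe proofs of precisely those two ingredients, and nothing in the sketch supplies them.
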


Thus, the sigma-function is homogeneous of degree $-6$ with respect to the variables $w$ and the parameters ${\bf y}$.
This fact, which plays an important role in applications, distinguishes the sigma-function from
the Riemann's theta-function, whose series expansion is given by the matrices of periods.

\section{Meromorphic functions on symmetric squares}
In this paragraph we use the description of the field of rational functions on the symmetric
square of the curve $V_{\bf y}$, following the paper \cite{B}. Set ${\bf y}=(y_4,y_6,y_8,y_{10},y_{12},y_{14})\in \mathbb{C}^6$.

Let $\mathcal{F}(V_{\bf y}^2)$ be the field of meromorphic functions on $V_{\bf y}^2$
and $J$ be the ideal in $\mathbb{C}[X_1,Y_1,X_2,Y_2]$ generated by the polynomials $Y_1^2-Q(X_1)$ and $Y_2^2-Q(X_2)$.
We denote the quotient field of an integral domain $R$ by $\langle R\rangle$.
Then we have
\[
\mathcal{F}(V_{\bf y}^2)=\langle\mathbb{C}[X_1,Y_1,X_2,Y_2]/J\rangle.
\]
We introduce the elements $u_2,u_4,u_5,u_7$ of the field of rational functions $\mathbb{C}(X_1,Y_1,X_2,Y_2)$ by
\[
u_2=\frac{X_1+X_2}{2},\quad u_4=\frac{(X_1-X_2)^2}{4},\quad u_5=\left(\frac{Y_1-Y_2}{X_1-X_2}\right),\quad u_7=\frac{Y_1+Y_2}{2}.
\]
Note that the elements $u_2,u_4,u_5,u_7$ are algebraically independent.
\begin{lemma}\label{lem-2}
For $k,\ell\ge0$ we have
\[\frac{1}{2}(X_1^kY_1^{\ell}+X_2^kY_2^{\ell}),\;\frac{1}{2}(X_1^kY_2^{\ell}+X_2^kY_1^{\ell})\in\mathbb{Z}[u_2,u_4,u_5,u_7].\]
\end{lemma}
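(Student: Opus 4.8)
The plan is to reduce the two families of expressions to symmetric functions of $X_1,X_2$ and of $Y_1,Y_2$ taken separately, and then to track integrality carefully. Write $p_k=X_1^k+X_2^k$, $q_\ell=Y_1^\ell+Y_2^\ell$, $\tilde p_k=X_1^k-X_2^k$, $\tilde q_\ell=Y_1^\ell-Y_2^\ell$, and set $D=X_1-X_2$. The starting observation is the pair of algebraic identities
\[
X_1^kY_1^\ell+X_2^kY_2^\ell=\tfrac12\bigl(p_kq_\ell+\tilde p_k\tilde q_\ell\bigr),\qquad
X_1^kY_2^\ell+X_2^kY_1^\ell=\tfrac12\bigl(p_kq_\ell-\tilde p_k\tilde q_\ell\bigr),
\]
so that both quantities in the statement equal $\tfrac14(p_kq_\ell\pm\tilde p_k\tilde q_\ell)$. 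It therefore suffices to show that $p_kq_\ell$ and $\tilde p_k\tilde q_\ell$ both lie in $\mathbb{Z}[u_2,u_4,u_5,u_7]$ and are divisible there by $4$.

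First I would record the elementary symmetric data in terms of the $u_i$: from the definitions one gets $X_1+X_2=2u_2$, $X_1X_2=u_2^2-u_4$, $Y_1+Y_2=2u_7$, $Y_1Y_2=u_7^2-u_4u_5^2$, together with $D^2=4u_4$ and $Y_1-Y_2=u_5D$. Since $p_k$ is a symmetric polynomial in $X_1,X_2$ it lies in $\mathbb{Z}[u_2,u_4]$ by the recursion $p_k=(X_1+X_2)p_{k-1}-(X_1X_2)p_{k-2}$ (valid for $k\ge2$ because $X_1,X_2$ are the roots of $t^2-(X_1+X_2)t+X_1X_2$); likewise $q_\ell\in\mathbb{Z}[u_4,u_5,u_7]$ via the analogous recursion with coefficients $2u_7$ and $u_7^2-u_4u_5^2$. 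The key integrality point is that $p_k$ and $q_\ell$ are each divisible by $2$: reducing modulo $2$, the base cases $p_0=q_0=2$, $p_1=2u_2$, $q_1=2u_7$ all vanish, and since the leading recursion coefficients $2u_2$ and $2u_7$ also vanish modulo $2$, the recursions force $p_k\equiv q_\ell\equiv0\pmod 2$ for all $k,\ell$. Consequently $p_kq_\ell$ is divisible by $4$.

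For the antisymmetric factor I would use $\tilde p_k=D\,h_{k-1}(X_1,X_2)$ and $\tilde q_\ell=(Y_1-Y_2)\,h_{\ell-1}(Y_1,Y_2)=u_5 D\,h_{\ell-1}(Y_1,Y_2)$ for $k,\ell\ge1$, where $h_{k-1}$ and $h_{\ell-1}$ are complete homogeneous symmetric polynomials and hence integer polynomials in the same elementary symmetric data, so they lie in $\mathbb{Z}[u_2,u_4]$ and $\mathbb{Z}[u_4,u_5,u_7]$ respectively. Multiplying and using $D^2=4u_4$ gives
\[
\tilde p_k\tilde q_\ell=D^2 u_5\,h_{k-1}(X_1,X_2)h_{\ell-1}(Y_1,Y_2)=4u_4u_5\,h_{k-1}(X_1,X_2)h_{\ell-1}(Y_1,Y_2),
\]
which is manifestly $4$ times an element of $\mathbb{Z}[u_2,u_4,u_5,u_7]$; when $k=0$ or $\ell=0$ the factor $\tilde p_k$ or $\tilde q_\ell$ vanishes and the product is $0$. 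Combining the two divisibility facts, $\tfrac14(p_kq_\ell\pm\tilde p_k\tilde q_\ell)\in\mathbb{Z}[u_2,u_4,u_5,u_7]$, which is the assertion.

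I expect the only genuine obstacle to be the integrality over $\mathbb{Z}$ rather than over $\mathbb{Q}$: a naive symmetric-function argument only places the expressions in $\mathbb{Q}[u_2,u_4,u_5,u_7]$, and what makes the denominators disappear is precisely the combination of the divisibility-by-$2$ of the power sums with the identity $D^2=4u_4$ that absorbs the remaining factor of $4$. Everything else is routine manipulation of symmetric polynomials, so I would keep the write-up focused on these two integrality mechanisms.
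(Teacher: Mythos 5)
Your proof is correct, and it takes a genuinely different route from the paper's. Your ingredients all check out: the decomposition $X_1^kY_1^\ell+X_2^kY_2^\ell=\tfrac12(p_kq_\ell+\tilde p_k\tilde q_\ell)$ and $X_1^kY_2^\ell+X_2^kY_1^\ell=\tfrac12(p_kq_\ell-\tilde p_k\tilde q_\ell)$ is exact; the Newton recursions place $p_k\in\mathbb{Z}[u_2,u_4]$ and $q_\ell\in\mathbb{Z}[u_4,u_5,u_7]$; the mod-$2$ induction from the base cases $p_0=2$, $p_1=2u_2$, $q_0=2$, $q_1=2u_7$ gives divisibility of every power sum by $2$; and $\tilde p_k\tilde q_\ell=4u_4u_5\,h_{k-1}(X_1,X_2)\,h_{\ell-1}(Y_1,Y_2)$ handles the antisymmetric part (with the $k=0$ or $\ell=0$ cases trivially zero, as you note). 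Since the $u_i$ are algebraically independent, $\mathbb{Z}[u_2,u_4,u_5,u_7]$ is a genuine polynomial ring, so your divisibility statements are meaningful. The paper instead forms the two-variable generating series $\sum_{k,\ell\ge0}(X_1^kY_1^\ell+X_2^kY_2^\ell)s^kt^\ell$ and $\sum_{k,\ell\ge0}(X_1^kY_2^\ell+X_2^kY_1^\ell)s^kt^\ell$ and computes each as a rational function with denominator $(1-2u_2s+(u_2^2-u_4)s^2)(1-2u_7t+(u_7^2-u_4u_5^2)t^2)$ and numerator $2(1-u_2s-u_7t+(u_2u_7\pm u_4u_5)st)$; because the denominator has constant term $1$ and coefficients in $\mathbb{Z}[u_2,u_4,u_5,u_7]$, the geometric-series expansion exhibits every coefficient as twice an element of that ring. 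The generating-function argument extracts the factor of $2$ once and for all from the numerator and, as a bonus, yields explicit polynomial expressions for each $k,\ell$; your argument trades that compactness for transparency, isolating the two separate mechanisms that kill denominators (divisibility by $2$ of the power sums, and $D^2=4u_4$ absorbing the remaining factor of $4$), and it proves the slightly stronger fact that $p_kq_\ell$ and $\tilde p_k\tilde q_\ell$ are each divisible by $4$, not merely their sum and difference.
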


\begin{proof} Note that $X_1X_2=u_2^2-u_4$ and $Y_1Y_2=u_7^2-u_4u_5^2$. Then:
\begin{multline*}
\sum_{k,\ell=0}^{\infty}(X_1^kY_1^{\ell}+X_2^kY_2^{\ell})s^kt^{\ell}=\frac{1}{1-X_1s}\cdot\frac{1}{1-Y_1t}+
\frac{1}{1-X_2s}\cdot\frac{1}{1-Y_2t}\\
=2\frac{1-u_2s-u_7t+(u_2u_7+u_4u_5)st}{(1-2u_2s+(u_2^2-u_4)s^2)(1-2u_7t+(u_7^2-u_4u_5^2)t^2)}\\
=2(1+u_2s+u_7t+(u_2u_7+u_4u_5)st+(u_4+u_2^2)s^2+(u_4u_5^2+u_7^2)t^2+\ldots).
\end{multline*}
and
\begin{multline*}
\sum_{k,\ell=0}^{\infty}(X_1^kY_2^{\ell}+X_2^kY_1^{\ell})s^kt^{\ell}=\frac{1}{1-X_1s}\cdot\frac{1}{1-Y_2t}
+\frac{1}{1-X_2s}\cdot\frac{1}{1-Y_1t}\\
=2\frac{1-u_2s-u_7t+(u_2u_7-u_4u_5)st}{(1-2u_2s+(u_2^2-u_4)s^2)(1-2u_7t+(u_7^2-u_4u_5^2)t^2)}\\
=2(1+u_2s+u_7t+(u_2u_7-u_4u_5)st+(u_4+u_2^2)s^2+(u_4u_5^2+u_7^2)t^2+\ldots).
\end{multline*}
Comparing the coefficients of $s^kt^{\ell}$  in the generating series, we obtain explicit expressions for the 
functions $X_1^kY_1^{\ell}+X_2^kY_2^{\ell}$ and $X_1^kY_2^{\ell}+X_2^kY_1^{\ell}$ in the form of polynomials in 
$u_2,u_4,u_5,u_7$ for any $k,\ell\ge0$.
\end{proof}

\begin{lemma}\label{16-1}
If a polynomial $f\in\mathbb{Z}[X_1,Y_1,X_2,Y_2]$ satisfies the condition $f(X_1,Y_1,X_2,Y_2)=f(X_2,Y_2,X_1,Y_1)$, 
then $f(X_1,Y_1,X_2,Y_2)\in\mathbb{Z}[u_2,u_4,u_5,u_7]$.
\end{lemma}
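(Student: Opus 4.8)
The plan is to reduce an arbitrary swap-symmetric polynomial to the two special forms already handled in Lemma \ref{lem-2}. First I would write $f$ as a $\mathbb{Z}$-linear combination of monomials $X_1^aY_1^bX_2^cY_2^d$. The hypothesis $f(X_1,Y_1,X_2,Y_2)=f(X_2,Y_2,X_1,Y_1)$ says precisely that the coefficient of $X_1^aY_1^bX_2^cY_2^d$ equals the coefficient of $X_1^cY_1^dX_2^aY_2^b$. Grouping the monomials into orbits under the involution $(a,b)\leftrightarrow(c,d)$, this lets me express $f$ as a $\mathbb{Z}$-linear combination of the symmetrized monomials $S_{a,b,c,d}=X_1^aY_1^bX_2^cY_2^d+X_1^cY_1^dX_2^aY_2^b$, together with the diagonal monomials $(X_1X_2)^a(Y_1Y_2)^b$ arising from the fixed orbits $(a,b)=(c,d)$. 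Hence it suffices to show that each $S_{a,b,c,d}$ lies in $\mathbb{Z}[u_2,u_4,u_5,u_7]$.

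The key step is to factor out a symmetric ``product'' part. Recall from the proof of Lemma \ref{lem-2} that $X_1X_2=u_2^2-u_4$ and $Y_1Y_2=u_7^2-u_4u_5^2$ both lie in $\mathbb{Z}[u_2,u_4,u_5,u_7]$. Setting $m=\min(a,c)$ and $n=\min(b,d)$, both monomials in $S_{a,b,c,d}$ share the common factor $(X_1X_2)^m(Y_1Y_2)^n$, so that $S_{a,b,c,d}=(X_1X_2)^m(Y_1Y_2)^n\,S_{a-m,\,b-n,\,c-m,\,d-n}$. Since the prefactor already lies in $\mathbb{Z}[u_2,u_4,u_5,u_7]$, I am reduced to the case $\min(a,c)=0$ and $\min(b,d)=0$.

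In that reduced case at least one of $a,c$ vanishes and at least one of $b,d$ vanishes. Going through the possibilities, $S_{a,b,c,d}$ collapses to either $X_1^kY_1^\ell+X_2^kY_2^\ell$ or $X_1^kY_2^\ell+X_2^kY_1^\ell$ for suitable $k,\ell\ge0$: for instance $c=d=0$ gives the former and $a=d=0$ gives the latter, while the remaining two subcases coincide with these by the built-in symmetry of $S$. Each of these is twice one of the expressions shown to lie in $\mathbb{Z}[u_2,u_4,u_5,u_7]$ in Lemma \ref{lem-2}, which finishes the argument.

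I do not expect a serious obstacle: the real content is the factoring identity, and the only points needing care are checking that the common factor $(X_1X_2)^m(Y_1Y_2)^n$ divides both terms of $S_{a,b,c,d}$ simultaneously (it does, because the involution fixes the unordered pairs $\{a,c\}$ and $\{b,d\}$, hence their minima), and verifying that the reduced forms are exactly the two families appearing in Lemma \ref{lem-2}.
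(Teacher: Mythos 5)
Your proof is correct and follows essentially the same route as the paper's: decompose $f$ into swap-symmetrized monomials, pull out the common factor $(X_1X_2)^{\min}(Y_1Y_2)^{\min}$ (which lies in $\mathbb{Z}[u_2,u_4,u_5,u_7]$ since $X_1X_2=u_2^2-u_4$ and $Y_1Y_2=u_7^2-u_4u_5^2$), and reduce to the two families $X_1^kY_1^{\ell}+X_2^kY_2^{\ell}$ and $X_1^kY_2^{\ell}+X_2^kY_1^{\ell}$ of Lemma \ref{lem-2}. The only difference is presentational: your uniform $\min$-based factorization (plus explicit treatment of the diagonal orbits) compresses the paper's four-case analysis.
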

\begin{proof}
The polynomial $f$ can be written in the form
\[
\sum_{k_1,k_2,\ell_1,\ell_2} c_{k_1,k_2,\ell_1,\ell_2}(X_1^{k_1}X_2^{k_2}Y_1^{\ell_1}Y_2^{\ell_2}+X_1^{k_2}X_2^{k_1}Y_1^{\ell_2}Y_2^{\ell_1}),
\]
where $c_{k_1,k_2,\ell_1,\ell_2}\in\mathbb{Z}$.
On the other hand, we have:
\begin{itemize}
\item for $k_1\ge k_2$ and $\ell_1\ge\ell_2$
\[
X_1^{k_1}X_2^{k_2}Y_1^{\ell_1}Y_2^{\ell_2}+X_1^{k_2}X_2^{k_1}Y_1^{\ell_2}Y_2^{\ell_1}=
X_1^{k_2}X_2^{k_2}Y_1^{\ell_2}Y_2^{\ell_2}(X_1^{k_1-k_2}Y_1^{\ell_1-\ell_2}+X_2^{k_1-k_2}Y_2^{\ell_1-\ell_2}),
\]
\item for $k_1\ge k_2$ and $\ell_1<\ell_2$
\[
X_1^{k_1}X_2^{k_2}Y_1^{\ell_1}Y_2^{\ell_2}+X_1^{k_2}X_2^{k_1}Y_1^{\ell_2}Y_2^{\ell_1}=
X_1^{k_2}X_2^{k_2}Y_1^{\ell_1}Y_2^{\ell_1}(X_1^{k_1-k_2}Y_2^{\ell_2-\ell_1}+X_2^{k_1-k_2}Y_1^{\ell_2-\ell_1}),
\]
\item for $k_1< k_2$ and $\ell_1\ge\ell_2$
\[
X_1^{k_1}X_2^{k_2}Y_1^{\ell_1}Y_2^{\ell_2}+X_1^{k_2}X_2^{k_1}Y_1^{\ell_2}Y_2^{\ell_1}=
X_1^{k_1}X_2^{k_1}Y_1^{\ell_2}Y_2^{\ell_2}(X_2^{k_2-k_1}Y_1^{\ell_1-\ell_2}+X_1^{k_2-k_1}Y_2^{\ell_1-\ell_2}),
\]
\item for $k_1<k_2$ and $\ell_1<\ell_2$
\[
X_1^{k_1}X_2^{k_2}Y_1^{\ell_1}Y_2^{\ell_2}+X_1^{k_2}X_2^{k_1}Y_1^{\ell_2}Y_2^{\ell_1}=
X_1^{k_1}X_2^{k_1}Y_1^{\ell_1}Y_2^{\ell_1}(X_2^{k_2-k_1}Y_2^{\ell_2-\ell_1}+X_1^{k_2-k_1}Y_1^{\ell_2-\ell_1}).
\]
\end{itemize}
Using the lemma \ref{lem-2}, we obtain the assertion of lemma \ref{16-1}.
\end{proof}

Put
\[
H_{12}=\frac{Y_1^2-Q(X_1)-Y_2^2+Q(X_2)}{X_1-X_2},\quad H_{14}=Y_1^2-Q(X_1)+Y_2^2-Q(X_2).
\]
Then we have
\begin{multline*}
H_{12}(u_2,u_4,u_5,u_7)=2u_5u_7-7u_2^6-35u_2^4u_4-21u_2^2u_4^2-u_4^3-y_4(5u_2^4+10u_2^2u_4+u_4^2)\\
+4y_6(u_2^3+u_2u_4)-y_8(3u_2^2+u_4)+2y_{10}u_2-y_{12},
\end{multline*}
\begin{multline*}
H_{14}(u_2,u_4,u_5,u_7)=-u_7^2-u_4u_5^2+2u_2u_5u_7-6u_2^7-14u_2^5u_4+14u_2^3u_4^2+6u_2u_4^3\\
-4y_4(u_2^5-u_2u_4^2)+y_6(3u_2^4-2u_2^2u_4-u_4^2)-2y_8(u_2^3-u_2u_4)+y_{10}(u_2^2-u_4)-y_{14}.
\end{multline*}
Let $T$ be the ideal in the ring $\mathbb{C}[u_2,u_4,u_5,u_7]$, generated by the polynomials $H_{12}$ and $H_{14}$.
Let $\mbox{Sym}^2(V_{\bf y})$ be the symmetric square of the curve $V_{\bf y}$.
We note that $\mbox{Sym}^2(V_{\bf y}), \;{\bf y}\in B,$ is a complex manifold of dimension $2$.
Let $\mathcal{F}(\mbox{Sym}^2(V_{\bf y}))$ be the field of meromorphic functions on this manifold.
Using the canonical projection $V_{\bf y}^2 \to \mbox{Sym}^2(V_{\bf y})$,
we will consider the field $\mathcal{F}(\mbox{Sym}^2(V_{\bf y}))$ as a subfield of $\mathcal{F}(V_{\bf y}^2)$.
Denote by $\overline{u}_i,\, i=2,4,5,7$, the elements of the field $\mathcal{F}(V_{\bf y}^2)$, which in the field $\mathbb{C}(X_1,Y_1,X_2,Y_2)$ are equal to $u_i$. There is a ring homomorphism
\[
\psi_1\;:\;\mathbb{C}[u_2,u_4,u_5,u_7]\to\mathcal{F}(V_{\bf y}^2),\quad u_i\to\overline{u}_i.
\]

\begin{lemma}\label{16-2}
We have $\Ker\,\psi_1=T$.
\end{lemma}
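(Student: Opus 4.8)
The plan is to prove the two inclusions $T\subseteq\Ker\psi_1$ and $\Ker\psi_1\subseteq T$ separately. The first inclusion is the routine direction: since $\overline{u}_i$ are the images in $\mathcal F(V_{\bf y}^2)$ of the functions $u_i\in\mathbb C(X_1,Y_1,X_2,Y_2)$, and since on $V_{\bf y}^2$ we have the defining relations $Y_1^2=Q(X_1)$ and $Y_2^2=Q(X_2)$, the functions $H_{12}$ and $H_{14}$ — being built from $Y_1^2-Q(X_1)$ and $Y_2^2-Q(X_2)$ — vanish identically on $V_{\bf y}^2$. Hence $\psi_1(H_{12})=\psi_1(H_{14})=0$, and because $\Ker\psi_1$ is an ideal containing both generators of $T$, we get $T\subseteq\Ker\psi_1$.

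For the reverse inclusion, I would take $g\in\mathbb C[u_2,u_4,u_5,u_7]$ with $\psi_1(g)=0$ and show $g\in T$. The idea is to use the relations $H_{12}$ and $H_{14}$ to reduce $g$ modulo $T$ to a canonical form, then argue that a nonzero canonical form cannot map to zero. Observe that $H_{14}$ is, up to the polynomial part, quadratic in $u_7$ with leading term $-u_7^2$, and $H_{12}$ is linear in $u_7$ with coefficient $2u_5$; more usefully, $H_{12}$ expresses $u_5u_7$ as a polynomial in $u_2,u_4$, and $H_{14}$ expresses $u_7^2+u_4u_5^2$ in terms of the others. The first step is therefore to reduce any power of $u_7$ above the first: modulo $T$ one can solve to eliminate $u_7^2$ (and hence all higher powers) in favour of lower-degree terms, leaving a representative that is at most linear in $u_7$, say $g\equiv P(u_2,u_4,u_5)+u_7\,R(u_2,u_4,u_5)\pmod T$.

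The key observation for finishing is that the map $V_{\bf y}^2\to\mathbb C^4$ given by $(X_1,Y_1,X_2,Y_2)\mapsto(u_2,u_4,u_5,u_7)$ is generically two-to-one onto its image (it factors through $\mathrm{Sym}^2(V_{\bf y})$), and the image is exactly the affine variety cut out by $H_{12}=H_{14}=0$. Equivalently, on the open dense locus where $X_1\neq X_2$ the four functions $u_2,u_4,u_5,u_7$ determine the unordered pair $\{(X_1,Y_1),(X_2,Y_2)\}$, so the coordinate ring of the image is precisely $\mathbb C[u_2,u_4,u_5,u_7]/T$. Thus $\psi_1(g)=0$ means $g$ vanishes on the image variety, and $T$ is the full vanishing ideal of that variety provided $T$ is prime (equivalently, $\mathbb C[u_2,u_4,u_5,u_7]/T$ is an integral domain of the correct dimension). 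To establish this one checks that the quotient ring is a two-dimensional domain: $u_2,u_4,u_5$ are algebraically independent on the image (they parametrize it generically), and $u_7$ is integral over $\mathbb C[u_2,u_4,u_5]$ via the quadratic relation coming from $H_{14}$ after substituting $u_5u_7$ from $H_{12}$. The reduced representative $P+u_7R$ then maps to zero only if both $P$ and $R$ vanish as functions on the base, i.e. are zero polynomials, giving $g\in T$.

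The main obstacle is the finishing step: showing that the ideal $T$ is genuinely the whole kernel and not a proper subideal — in other words, that no further relations among the $\overline{u}_i$ are needed beyond $H_{12}$ and $H_{14}$. This is precisely the primality/reducedness of $\mathbb C[u_2,u_4,u_5,u_7]/T$ together with the dimension count, and it is where the explicit structure of $H_{12}$ (linear in $u_7$) and $H_{14}$ (quadratic in $u_7$) must be used to control the elimination and verify that the resulting degree-two extension is exactly the double cover $V_{\bf y}^2\to\mathrm{Sym}^2(V_{\bf y})$. Once the reduction to the normal form $P+u_7R$ is in hand, the injectivity of $\psi_1$ on such normal forms follows from the generic two-to-one description, completing $\Ker\psi_1\subseteq T$.
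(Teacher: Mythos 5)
The inclusion $T\subseteq\Ker\psi_1$ is fine and coincides with the paper's argument; the gap is in the reverse inclusion, whose finishing step rests on a false claim. You assert that $u_2,u_4,u_5$ are algebraically independent on the image, and conclude that a reduced representative $P(u_2,u_4,u_5)+u_7R(u_2,u_4,u_5)$ is killed by $\psi_1$ only if $P=R=0$. But $\mathcal{F}(V_{\bf y}^2)$ is the fraction field of $\mathbb{C}[X_1,Y_1,X_2,Y_2]/J$, which has transcendence degree $2$ over $\mathbb{C}$, so \emph{any} three of its elements are algebraically dependent; in particular $\overline{u}_2,\overline{u}_4,\overline{u}_5$ are. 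Explicitly, on $V_{\bf y}^2$ one has $2Y_1Y_2=Q(X_1)+Q(X_2)-4u_4u_5^2$, and $Q(X_1)+Q(X_2)=A(u_2,u_4)$, $Q(X_1)Q(X_2)=B(u_2,u_4)$ are polynomials in $u_2,u_4$ because $X_1+X_2=2u_2$, $X_1X_2=u_2^2-u_4$; squaring gives the polynomial
\[
s(u_2,u_4,u_5)=\bigl(A(u_2,u_4)-4u_4u_5^2\bigr)^2-4B(u_2,u_4)\in\Ker\psi_1,
\]
which is nonzero (its $u_5^4$-term $16u_4^2u_5^4$ cannot cancel) and involves no $u_7$ at all. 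This $s$ is already in your normal form (take $R=0$), is not the zero polynomial, yet $\psi_1(s)=0$; so your injectivity-on-normal-forms step is false as stated, and whether $s\in T$ is essentially the content of the lemma, so it cannot be presupposed. Your dimension count is also internally inconsistent: if $u_2,u_4,u_5$ were independent modulo $T$ and $u_7$ integral over them, the quotient would have Krull dimension $3$, whereas the image of $V_{\bf y}^2$ under the generically $2$-to-$1$ map you describe has dimension $2$.

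More broadly, the geometric route (identify the image with $V(H_{12},H_{14})$, then apply the Nullstellensatz) can at best yield $\Ker\psi_1=\sqrt{T}$; the entire difficulty is to show that $T$ itself is prime (equivalently radical), i.e.\ that $H_{12},H_{14}$ generate the full vanishing ideal including relations such as $s$ above, and that is exactly the point where your sketch fails. (Also, the image of a morphism is only constructible; to get density of the image in $V(T)$ you would still need to check that no component of $V(T)$ lies in $\{u_4=0\}$.) The paper avoids all of this by a purely algebraic argument: it writes $g\in\Ker\psi_1$ as $h/u_4^a$ with $h$ a polynomial symmetric under $(X_1,Y_1)\leftrightarrow(X_2,Y_2)$; since $\psi_1(g)=0$, $h$ lies in $J=(Y_1^2-Q(X_1),\,Y_2^2-Q(X_2))$; symmetrizing the coefficients and substituting $Y_1^2-Q(X_1)=\frac{1}{2}(H_{14}+(X_1-X_2)H_{12})$ and $Y_2^2-Q(X_2)=\frac{1}{2}(H_{14}-(X_1-X_2)H_{12})$, lemma \ref{16-1} converts this into $4h=k_1H_{14}+k_2H_{12}$ with $k_1,k_2\in\mathbb{C}[u_2,u_4,u_5,u_7]$, and a divisibility-in-$u_4$ argument removes the denominator $u_4^a$, giving $g\in T$. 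If you want to rescue your approach, you must prove primality of $T$ honestly — for instance localize at $u_5$, eliminate $u_7$ via $H_{12}$, prove irreducibility of the resulting hypersurface equation in $u_2,u_4,u_5$, and show $u_5$ is a non-zero-divisor modulo $T$ — which is substantially more work than your elimination sketch suggests.
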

\begin{proof}
By definition, the polynomials $H_{12}$ and $H_{14}$ lie in $\Ker\,\psi_1$.
Consequently, $T\subset\Ker\,\psi_1$. Put $g\in\Ker\,\psi_1$.
Then there exist a nonnegative integer $a$ and $h\in\mathbb{C}[X_1,Y_1,X_2,Y_2]$ such that
\begin{equation}\label{16-20}
h(X_1,Y_1,X_2,Y_2)=h(X_2,Y_2,X_1,Y_1)\; \text{ and } \; g=h(X_1,Y_1,X_2,Y_2)/u_4^a.
\end{equation}
As $g\in\Ker\,\psi_1$, then there exist $h_1,h_2\in\mathbb{C}[X_1,Y_1,X_2,Y_2]$ such that
\[
h(X_1,Y_1,X_2,Y_2)=(Y_1^2-Q(X_1))h_1(X_1,Y_1,X_2,Y_2)+(Y_2^2-Q(X_2))h_2(X_1,Y_1,X_2,Y_2).
\]
Put $k(X_1,Y_1,X_2,Y_2)=h_1(X_1,Y_1,X_2,Y_2)+h_2(X_2,Y_2,X_1,Y_1)$.
Then it follows from (\ref{16-20}) that
\[
2h(X_1,Y_1,X_2,Y_2)=(Y_1^2-Q(X_1))k(X_1,Y_1,X_2,Y_2)+(Y_2^2-Q(X_2))k(X_2,Y_2,X_1,Y_1).
\]
We have
\[
Y_1^2-Q(X_1)=\frac{1}{2}(H_{14}+(X_1-X_2)H_{12}),\;\;Y_2^2-Q(X_2)=\frac{1}{2}(H_{14}-(X_1-X_2)H_{12}).
\]
Therefore
\begin{multline*}
4h(X_1,Y_1,X_2,Y_2)=(k(X_1,Y_1,X_2,Y_2)+k(X_2,Y_2,X_1,Y_1))H_{14}\\
+(X_1-X_2)(k(X_1,Y_1,X_2,Y_2)-k(X_2,Y_2,X_1,Y_1))H_{12}.
\end{multline*}
Then by lemma \ref{16-1}, there exist  $k_1,k_2\in\mathbb{C}[u_2,u_4,u_5,u_7]$ such that
\[
4h(X_1,Y_1,X_2,Y_2)=k_1(u_2,u_4,u_5,u_7)H_{14}+k_2(u_2,u_4,u_5,u_7)H_{12}.
\]
We set $k_1(u_2,u_4,u_5,u_7)=u_4^b\ell_1(u_2,u_4,u_5,u_7),\; k_2(u_2,u_4,u_5,u_7)=u_4^c\ell_2(u_2,u_4,u_5,u_7)$,
where $\ell_1,\ell_2\in\mathbb{C}[X_1,Y_1,X_2,Y_2]$ do not divide by $u_4$.
Without loss of generality, we can assume that $b\le c$. Set $a>b$. Then $4u_4^{a-b}g=H_{14}\ell_1+H_{12}u_4^{c-b}\ell_2$.
Since the right-hand side of this equality is not divisible by $u_4$, we arrive at a contradiction.
Hence $a\le b$, and we get
\[
4g=H_{14}u_4^{b-a}\ell_1+H_{12}u_4^{c-a}\ell_2\in T.
\]
\end{proof}

There is an embedding of fields $\widetilde{\psi}_1\colon \langle\mathbb{C}[u_2,u_4,u_5,u_7]/T\rangle \to \mathcal{F}(V_{\bf y}^2)$.
\begin{thm}\label{t-4}
We have $\mathcal{F}(\mbox{\rm Sym}^2(V_{\bf y}))=\im\widetilde{\psi}_1$.
\end{thm}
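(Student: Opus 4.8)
The plan is to show the two inclusions $\mathcal{F}(\mathrm{Sym}^2(V_{\bf y}))\subseteq\im\widetilde\psi_1$ and $\im\widetilde\psi_1\subseteq\mathcal{F}(\mathrm{Sym}^2(V_{\bf y}))$ as subfields of $\mathcal{F}(V_{\bf y}^2)$. The easier direction is the second one: by Lemma~\ref{16-2} the field $\langle\mathbb{C}[u_2,u_4,u_5,u_7]/T\rangle$ is the quotient field of the image of $\mathbb{C}[u_2,u_4,u_5,u_7]$ inside $\mathcal{F}(V_{\bf y}^2)$, so every element of $\im\widetilde\psi_1$ is a ratio of the functions $\overline u_2,\overline u_4,\overline u_5,\overline u_7$. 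Since each $u_i$ is manifestly symmetric under the swap $(X_1,Y_1)\leftrightarrow(X_2,Y_2)$, the functions $\overline u_i$ descend through the canonical projection $V_{\bf y}^2\to\mathrm{Sym}^2(V_{\bf y})$, hence $\im\widetilde\psi_1\subseteq\mathcal{F}(\mathrm{Sym}^2(V_{\bf y}))$.

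For the reverse inclusion I would take an arbitrary $f\in\mathcal{F}(\mathrm{Sym}^2(V_{\bf y}))$, viewed inside $\mathcal{F}(V_{\bf y}^2)=\langle\mathbb{C}[X_1,Y_1,X_2,Y_2]/J\rangle$. Being pulled back from the symmetric square, $f$ is invariant under the involution $\iota\colon(X_1,Y_1,X_2,Y_2)\mapsto(X_2,Y_2,X_1,Y_1)$. Writing $f=p/q$ with $p,q\in\mathbb{C}[X_1,Y_1,X_2,Y_2]$ representing classes mod $J$, I would symmetrize: replacing $q$ by $q\cdot\iota(q)$ and $p$ correspondingly by $p\cdot\iota(q)$, one may assume the denominator is $\iota$-invariant, and then the numerator is $\iota$-invariant as well since $f$ is. Thus $f$ is represented as a ratio of two $\iota$-symmetric polynomials. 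Applying Lemma~\ref{16-1}, each such polynomial lies in $\mathbb{Z}[u_2,u_4,u_5,u_7]$ (after clearing an obvious denominator, in $\mathbb{C}[u_2,u_4,u_5,u_7]$), so $f$ equals a ratio of elements of $\mathbb{C}[u_2,u_4,u_5,u_7]$ evaluated at the $\overline u_i$; that is, $f\in\im\widetilde\psi_1$.

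The main obstacle, and the step requiring care, is controlling denominators so that $f$ genuinely appears as the image of a \emph{single} element of $\langle\mathbb{C}[u_2,u_4,u_5,u_7]/T\rangle$ rather than merely being approximated by symmetric functions. Concretely, one must verify that after symmetrizing the denominator and applying Lemma~\ref{16-1}, the resulting expression $k_p(\overline u)/k_q(\overline u)$ with $k_p,k_q\in\mathbb{C}[u_2,u_4,u_5,u_7]$ is well defined in the quotient field, i.e.\ that $k_q$ does not map to $0$ under $\psi_1$; this is exactly where Lemma~\ref{16-2} is used, since $\Ker\psi_1=T$ guarantees that $k_q\notin T$ precisely when its image is nonzero, so $\widetilde\psi_1(k_p+T)/(k_q+T)$ makes sense and equals $f$. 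A secondary subtlety is the passage from $V_{\bf y}^2$ to $\mathrm{Sym}^2(V_{\bf y})$ on the level of meromorphic (rather than merely rational) functions: one should note that $\mathrm{Sym}^2(V_{\bf y})$ is a two-dimensional complex manifold and that the algebraically independent functions $u_2,u_4,u_5,u_7$ generate its function field, so that no transcendental meromorphic functions arise beyond those already captured by the rational description. Assembling the two inclusions then yields $\mathcal{F}(\mathrm{Sym}^2(V_{\bf y}))=\im\widetilde\psi_1$.
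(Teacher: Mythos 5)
Your route to the hard inclusion is genuinely different from the paper's: the paper writes each $g\in\mathcal{F}(V_{\bf y}^2)$ \emph{uniquely} as $h_1Y_1Y_2+h_2Y_1+h_3Y_2+h_4$ with $h_i\in\mathbb{C}(X_1,X_2)$, so that the symmetry of $g$ becomes honest identities among the $h_i$ (this is what lemmas \ref{lem1} and \ref{lem3} exploit, together with lemma \ref{16-1} for the $h_2Y_1+h_3Y_2$ part), whereas you symmetrize a single fraction $p/q$ directly. Your strategy can be made to work and is arguably more economical, but as written it has one genuine gap, located precisely at the ambiguity that the paper's unique decomposition is designed to eliminate.

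The gap is the sentence claiming that after replacing $p/q$ by $p\,\iota(q)/\bigl(q\,\iota(q)\bigr)$, ``the numerator is $\iota$-invariant as well since $f$ is,'' where $\iota$ is the swap $(X_1,Y_1)\leftrightarrow(X_2,Y_2)$. Invariance of $f$ only gives $\iota\bigl(p\,\iota(q)\bigr)-p\,\iota(q)\in J$, i.e.\ invariance of the residue class modulo $J$; it does not make the \emph{polynomial} $p\,\iota(q)$ symmetric, because representatives modulo $J=\bigl(Y_1^2-Q(X_1),\,Y_2^2-Q(X_2)\bigr)$ are far from unique. For example, $Q(X_1)+Y_2^2$ and $Y_1^2+Y_2^2$ represent the same ($\iota$-invariant) element of $\mathcal{F}(V_{\bf y}^2)$, yet only the second is a symmetric polynomial. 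This matters because lemma \ref{16-1} is a statement about actual symmetric polynomials in $X_1,Y_1,X_2,Y_2$ (with integer, or by the same proof complex, coefficients); it cannot be applied to a class mod $J$. The repair is short: since the class of $N=p\,\iota(q)$ is $\iota$-invariant, replace $N$ by $\tfrac12\bigl(N+\iota(N)\bigr)$, which is a genuinely symmetric polynomial lying in the same class, and apply lemma \ref{16-1} to it and to the symmetric denominator $q\,\iota(q)$. After this fix the rest of your argument is sound: $\psi_1$ of the resulting denominator is nonzero because $q\,\iota(q)\notin J$ (the quotient $\mathbb{C}[X_1,Y_1,X_2,Y_2]/J$ is an integral domain and $q\notin J$), so $f=\widetilde{\psi}_1\bigl((k_p+T)/(k_q+T)\bigr)\in\im\widetilde{\psi}_1$; note that at this step what you actually need is only $\psi_1(k_q)\neq0$, while lemma \ref{16-2} enters through the construction of $\widetilde{\psi}_1$ itself rather than as the crucial point you describe. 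Your easy inclusion $\im\widetilde{\psi}_1\subseteq\mathcal{F}(\mbox{Sym}^2(V_{\bf y}))$ is correct and coincides with the paper's.
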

\begin{proof}
The field $\mathcal{F}(V_{\bf y}^2)$ is an extension of the field $\mathbb{C}(X_1,X_2)$ by means of two quadratic elements
$Y_1$ and $Y_2$. So any element $g$ of $\mathcal{F}(V_{\bf y}^2)$ can be uniquely written in the form
\begin{equation}\label{a}
g=h_1(X_1,X_2)Y_1Y_2+h_2(X_1,X_2)Y_1+h_3(X_1,X_2)Y_2+h_4(X_1,X_2),
\end{equation}
where $h_i(X_1,X_2)\in \mathbb{C}(X_1,X_2)$ for $1\le i\le 4$.
We have
\begin{equation}\label{b}
\mathcal{F}(\mbox{Sym}^2(V_{\bf y}))=\{g\in\mathcal{F}(V_{\bf y}^2)\;|\;
h_i(X_2,X_1)=h_i(X_1,X_2),i=1,4,\;h_2(X_2,X_1)=h_3(X_1,X_2)\}.
\end{equation}
Thus $\overline{u}_2,\overline{u}_4,\overline{u}_5,\overline{u}_7\in\mathcal{F}(\mbox{Sym}^2(V_{\bf y}))$, and so
$\im\widetilde{\psi}_1\subset \mathcal{F}(\mbox{Sym}^2(V_{\bf y}))$.

We now prove that the reverse inclusion holds.
Let $f(X_1,X_2)\in\mathbb{C}(X_1,X_2), \; f(X_1,X_2)\neq0$. Then
\[
f(X_1,X_2)=\frac{f_1(X_1,X_2)}{f_2(X_1,X_2)},\;f_i(X_1,X_2)\in\mathbb{C}[X_1,X_2],
\]
where $f_1(X_1,X_2)$ and $f_2(X_1,X_2)$ are relatively prime polynomials.

\begin{lemma}\label{lem1}
If $f(X_2,X_1)=f(X_1,X_2)$, then we have
\[
f_1(X_2,X_1)=f_1(X_1,X_2),\;\;f_2(X_2,X_1)=f_2(X_1,X_2).
\]
\end{lemma}

\begin{proof}
Since $f(X_2,X_1)=f(X_1,X_2)$, it follows that
\[
f_1(X_2,X_1)f_2(X_1,X_2)=f_1(X_1,X_2)f_2(X_2,X_1).
\]
The ring $\mathbb{C}[X_1,X_2]$ is a unique factorization domain. Since
$f_1(X_1,X_2)$ and $f_2(X_1,X_2)$ are relatively prime,
there exists a polynomial $k(X_1,X_2)\in\mathbb{C}[X_1,X_2]$ such that
\begin{equation}\label{prime}
f_1(X_2,X_1)=f_1(X_1,X_2)k(X_1,X_2),\;\;f_2(X_2,X_1)=f_2(X_1,X_2)k(X_1,X_2).
\end{equation}
Using now that the polynomials $f_1(X_2,X_1)$ and $f_2(X_2,X_1)$ are also
relatively prime, we have $c=k(X_1,X_2)\in\mathbb{C}^*$.
From the condition that the polynomials $f_1(X_1,X_2)$ and $f_2(X_1,X_2)$
are relatively prime, we have that at least one of them is not divisible by $X_1-X_2$.
Without loss of generality, we can assume that $f_1(X_1,X_2)$ is not divisible by $X_1-X_2$.
Then $f_1(X_1,X_1)\neq0$ and according to (\ref{prime}) we get $f_1(X_1,X_1)=cf_1(X_1,X_1)$,
i.e. $c=1$.
\end{proof}

We write an arbitrary element $g\in\mathcal{F}(\mbox{Sym}^2(V_{\bf y}))$ in the form (\ref{a}).
From (\ref{b}) we find that $h_1(X_1,X_2)$ and $h_4(X_1,X_2)$ are symmetric functions.
If $h_1(X_1,X_2), h_4(X_1,X_2)\neq0$, then according to lemma \ref{lem1} their numerators and
denominators are the symmetric polynomials.
The map (see \cite{B} lemma 11)
\[
\xi\colon \mathbb{C}^2\times\mathbb{C}^2 \to\mathbb{C}^5; \quad \xi((X_1,Y_1),(X_2,Y_2))=(v_2,v_4,v_7,v_9,v_{14}),
\]
where $v_2=X_1+X_2,\; v_4=(X_1-X_2)^2,\; v_7=Y_1+Y_2,\; v_9=(X_1-X_2)(Y_1-Y_2),\; v_{14}=(Y_1-Y_2)^2$, allows us to identify the manifold $(\mathbb{C}^2\times\mathbb{C}^2)/S_2$ with the hypersurface in $\mathbb{C}^5$, given by the equation $v_4v_{14}-v_9^2=0$. Here $S_2$ is
the group of permutations on a set of two elements. Using that
$v_2=2u_2,\; v_4=4u_4,\; v_7=2u_7,\; v_9=4u_4u_5,\; v_{14}=4u_4u_5^2$, we get $h_1(X_1,X_2),\;h_4(X_1,X_2)\in\mathbb{C}(u_2,u_4,u_5,u_7)$.

Set $h_2(X_1,X_2)=0$ in the expansion (\ref{a}) for the element $g$.
Then from (\ref{b}) we have $h_3(X_1,X_2)=0$. Therefore, in this case we obtain $g\in\im\widetilde{\psi}_1$.
Now we consider the case $h_2(X_1,X_2)\neq0$ and
write the functions $h_2(X_1,X_2)$ and $h_3(X_1,X_2)$ in the form of irreducible fractions
\[
h_2(X_1,X_2)=\frac{h_{21}(X_1,X_2)}{h_{22}(X_1,X_2)},\;\;h_3(X_1,X_2)=\frac{h_{31}(X_1,X_2)}{h_{32}(X_1,X_2)},
\]
where $h_{ij}(X_1,X_2)\in\mathbb{C}[X_1,X_2]$.

\begin{lemma}\label{lem3}
There exists a constant $c\in\mathbb{C}^*$ such that
\[
h_{31}(X_1,X_2)=c\cdot h_{21}(X_2,X_1),\quad h_{32}(X_1,X_2)=c\cdot h_{22}(X_2,X_1).
\]
\end{lemma}

\begin{proof}
Since $h_2(X_2,X_1)=h_3(X_1,X_2)$, we have $h_{21}(X_2,X_1)h_{32}(X_1,X_2)=h_{22}(X_2,X_1)h_{31}(X_1,X_2)$.
Using again that the ring $\mathbb{C}[X_1,X_2]$ is a unique factorization domain, we obtain
that there exists a polynomial $\ell(X_1,X_2)\in\mathbb{C}[X_1,X_2]$ such that
\[
h_{31}(X_1,X_2)=h_{21}(X_2,X_1)\ell(X_1,X_2),\quad h_{32}(X_1,X_2)=h_{22}(X_2,X_1)\ell(X_1,X_2).
\]
Since $h_{31}(X_1,X_2)$ and $h_{32}(X_1,X_2)$ are relatively prime,
we have $c=\ell(X_1,X_2)\in\mathbb{C}^*$.
\end{proof}

Now let us return to the proof of the theorem \ref{t-4}. We have
\begin{equation}\label{c}
h_2(X_1,X_2)Y_1+h_3(X_1,X_2)Y_2=\frac{h_{21}(X_1,X_2)h_{32}(X_1,X_2)Y_1+h_{22}(X_1,X_2)h_{31}
(X_1,X_2)Y_2}{h_{22}(X_1,X_2)h_{32}(X_1,X_2)}.
\end{equation}
According to lemma \ref{lem3}, there is an equality $h_{22}(X_1,X_2)h_{32}(X_1,X_2)=
c\cdot h_{22}(X_1,X_2)h_{22}(X_2,X_1)$. Hence the polynomial $h_{22}(X_1,X_2)h_{32}(X_1,X_2)$
is symmetric and the denominator of the fraction in the formula (\ref{c}) is a polynomial in $u_2$ and $u_4$.
According to lemma \ref{lem3}, there is an equality
\begin{multline*}
h_{21}(X_1,X_2)h_{32}(X_1,X_2)Y_1+h_{22}(X_1,X_2)h_{31}(X_1,X_2)Y_2 \\
=c(h_{21}(X_1,X_2)h_{22}(X_2,X_1)Y_1+h_{22}(X_1,X_2)h_{21}(X_2,X_1)Y_2).
\end{multline*}
Therefore, the numerator in the formula (\ref{c}) is a symmetric polynomial.
Then according to lemma \ref{16-1}, it belongs to the ring $\mathbb{C}[u_2,u_4,u_5,u_7].$
Thus we get $h_2(X_1,X_2)Y_1+h_3(X_1,X_2)Y_2\in\mathbb{C}(u_2,u_4,u_5,u_7)$,
that is $g\in\im\widetilde{\psi}_1$.
\end{proof}

\section{Meromorphic functions on sigma-divisor}

Let us consider the Abel--Jacobi map
\[
I\colon\mbox{Sym}^2(V_{\bf y})\to{\rm Jac}(V_{\bf y})=\mathbb{C}^3/\Lambda,\qquad
(P_1,P_2)\to\int_{\infty}^{P_1}du+\int_{\infty}^{P_2}du.
\]
Set $w^{[2]}=I((P_1,P_2))$ where $P_i=(X_i,Y_i)\in V_{\bf y}$, $i=1,2$.
We introduce the following meromorphic functions on $\mathbb{C}^3$
\[
f_1=\frac{\sigma_{1,1}}{\sigma_1},\;f_2=\frac{\sigma_3}{\sigma_1},\;
f_3=\frac{\sigma_{1,3}}{\sigma_1},\;f_4=\frac{\sigma_5}{\sigma_1},\;
f_5=\frac{\sigma_{3,3}}{\sigma_1},\;g_5=\frac{\sigma_{1,5}}{\sigma_1},\;
f_7=\frac{\sigma_{3,5}}{\sigma_1}.
\]

Note that the indices of these functions are chosen equal to the
grading of the functions.
The set $W=\{w\in \mathbb{C}^3\;|\;\sigma(w)=0\}$ is a complex
2-dimensional surface in $\mathbb{C}^3$.
The gradient of the sigma-function $\nabla\sigma=(\sigma_1,\sigma_3,\sigma_5)$ defines
the complex curves $W_i=\{w\in W\;|\;\sigma_i(w)=0\}$ on this surface.
It is known that $W_1\cap W_3\cap W_5=\Lambda$ (see \cite{B2}, \cite{F}).

From proposition \ref{period} (ii) it follows that $w+\Omega\in W_i$ for any
$w\in W_i$ and $\Omega\in\Lambda$.
Thus, the complex curves $W_i/\Lambda\subset{\rm Jac}(V_{\bf y}),\;i=1,3,5$, are defined.
For any $j=1,3,5$, the two meromorphic on $\mathbb{C}^3$ functions $\sigma_i/\sigma_j$, $i\neq j$, are
holomorphic on $W\backslash W_j$ and invariant under the shift of any point $w\in W$
by periods $\Omega\in\Lambda$.
We note that only the functions $f_2=\frac{\sigma_3}{\sigma_1}$ and $f_4=\frac{\sigma_5}{\sigma_1}$
among the seven functions listed above give single-valued functions on the sigma-divisor $W/\Lambda$.

\begin{prop}\label{prop3}
We have the following formulas:
\[
\overline{u}_2=-\frac{1}{2}f_2(w^{[2]}),\;\;\overline{u}_4=(\frac{1}{4}f_2^2-f_4)(w^{[2]}),\;\;\overline{u}_5=\frac{1}{2}(f_1f_2^2+f_5-2f_2f_3)(w^{[2]}),
\]
\[
\overline{u}_7=\frac{1}{4}(2f_2^2f_3-2f_3f_4-f_1f_2^3+2f_1f_2f_4-f_2f_5+2f_7-2f_2g_5)(w^{[2]}).
\]
\end{prop}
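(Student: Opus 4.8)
The guiding idea is that the Abel--Jacobi image of $\mathrm{Sym}^2(V_{\bf y})$ is exactly the sigma-divisor, so $\sigma(w^{[2]})\equiv 0$ as $(P_1,P_2)$ varies; I would obtain all four formulas by differentiating this identity along the surface and reading off the symmetric functions of $(X_i,Y_i)$. Taking $X_1,X_2$ as local coordinates and using the given holomorphic differentials, $\partial w_1/\partial X_i=-X_i^2/(2Y_i)$, $\partial w_3/\partial X_i=-X_i/(2Y_i)$, $\partial w_5/\partial X_i=-1/(2Y_i)$. Differentiating $\sigma(w^{[2]})=0$ in $X_i$ and multiplying by $-2Y_i$, the chain rule collapses to
\[
\sigma_1(w)X_i^2+\sigma_3(w)X_i+\sigma_5(w)=0,\qquad i=1,2,
\]
so $X_1,X_2$ are the roots of $\sigma_1X^2+\sigma_3X+\sigma_5$. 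Vieta's formulas $X_1+X_2=-\sigma_3/\sigma_1=-f_2$ and $X_1X_2=\sigma_5/\sigma_1=f_4$ give $\overline{u}_2=-\tfrac12 f_2(w^{[2]})$ and $\overline{u}_4=(\tfrac14 f_2^2-f_4)(w^{[2]})$ immediately.

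To recover $\overline{u}_5,\overline{u}_7$ I must produce $Y_1,Y_2$, which costs one further derivative. Differentiating the identity $\sigma_1X_1^2+\sigma_3X_1+\sigma_5=0$ again in $X_1$, using the chain rule together with $2X_1\sigma_1+\sigma_3=\sigma_1(X_1-X_2)$, yields
\[
P(X_1)=2\sigma_1(X_1-X_2)Y_1,\qquad P(X):=\sigma_{1,1}X^4+2\sigma_{1,3}X^3+(2\sigma_{1,5}+\sigma_{3,3})X^2+2\sigma_{3,5}X+\sigma_{5,5},
\]
and symmetrically $P(X_2)=-2\sigma_1(X_1-X_2)Y_2$. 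Reducing $P$ modulo the quadratic $\sigma_1X^2+\sigma_3X+\sigma_5$ and calling the remainder $\alpha X+\beta$, one has $P(X_i)=\alpha X_i+\beta$ since $X_i$ is a root, whence $Y_1+Y_2=\alpha/(2\sigma_1)$ and $Y_1-Y_2=(\alpha(X_1+X_2)+2\beta)/(2\sigma_1(X_1-X_2))$. The first of these, after expressing $\alpha$ through the $f_j$ and $g_5$, already produces the stated formula for $\overline{u}_7=\tfrac12(Y_1+Y_2)$ and notably uses only the diagonal second derivative.

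The remaining function $\overline{u}_5=(Y_1-Y_2)/(X_1-X_2)=(-\alpha\sigma_3+2\beta\sigma_1)/\big(2(\sigma_3^2-4\sigma_1\sigma_5)\big)$ requires eliminating $\sigma_{5,5}$, the only second derivative not named among $f_j,g_5$. For this I would use the mixed second derivative: since $\partial w_m/\partial X_1$ does not depend on $X_2$, differentiating $\sigma(w^{[2]})=0$ in both $X_1,X_2$ and clearing $4Y_1Y_2$ gives
\[
\sigma_{1,1}q^2+\sigma_{1,3}pq+\sigma_{1,5}(p^2-2q)+\sigma_{3,3}q+\sigma_{3,5}p+\sigma_{5,5}=0,\qquad p=-\tfrac{\sigma_3}{\sigma_1},\ \ q=\tfrac{\sigma_5}{\sigma_1},
\]
which solves for $\sigma_{5,5}$. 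Substituting it into $\beta$ and simplifying, the numerator $-\alpha\sigma_3+2\beta\sigma_1$ turns out to be divisible by the discriminant $\sigma_3^2-4\sigma_1\sigma_5$, and the quotient is precisely $\sigma_1^{-3}(\sigma_{1,1}\sigma_3^2-2\sigma_{1,3}\sigma_1\sigma_3+\sigma_{3,3}\sigma_1^2)$, i.e. $2\,\overline{u}_5=f_1f_2^2+f_5-2f_2f_3$. I expect this divisibility to be the crux: it is the one non-bookkeeping step and is exactly what the mixed relation is there to guarantee. All manipulations are valid as identities of meromorphic functions on the divisor away from $W_1$, where $\sigma_1\neq0$ and $X_1\neq X_2$ generically.
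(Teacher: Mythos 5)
Your proposal is correct --- I checked the quadratic identity $\sigma_1X_i^2+\sigma_3X_i+\sigma_5=0$, the reduction of $P$ modulo it, the elimination of $\sigma_{5,5}$ via the mixed relation, and the final divisibility by the discriminant: all hold and reproduce the stated formulas --- but it follows a genuinely different route from the paper's. The paper first proves lemma \ref{lem4} ($X_1+X_2=-f_2(w^{[2]})$, $X_1X_2=f_4(w^{[2]})$) not by differentiating $\sigma(w^{[2]})\equiv 0$, but by quoting the Jacobi inversion formulas for $\wp_{1,1},\wp_{1,3},\wp_{1,5}$ on $\mathrm{Sym}^3(V_{\bf y})$ (\cite{B2}, theorem 2.4) and passing to the limit $X_3\to\infty$; then, for $\overline{u}_5$ and $\overline{u}_7$, instead of differentiating the sigma identity a second time, it differentiates the already established relation $X_1+X_2=-f_2(w^{[2]})$ with respect to each point $P_i$, which yields at once $2Y_i=f_{2,1}(w^{[2]})X_i^2+f_{2,3}(w^{[2]})X_i+f_{2,5}(w^{[2]})$. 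Because $f_2=\sigma_3/\sigma_1$, its partials are automatically polynomial in the named functions ($f_{2,1}=f_3-f_1f_2$, $f_{2,3}=f_5-f_2f_3$, $f_{2,5}=f_7-f_2g_5$), so $\sigma_{5,5}$ never enters, and $\overline{u}_5,\overline{u}_7$ follow by Vieta with no elimination step and no discriminant factor. Your version pays the price of working with $\sigma(w^{[2]})\equiv0$ directly: the second derivative introduces $\sigma_{5,5}$, which you must remove with the mixed $\partial_{X_1}\partial_{X_2}$ relation, and you then need the cancellation of $\sigma_3^2-4\sigma_1\sigma_5$ --- true, as you anticipated, but this is exactly the bookkeeping the paper's choice of what to differentiate avoids. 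What your route buys is self-containedness: you never invoke the $\mathrm{Sym}^3$ inversion formulas, only Riemann's vanishing theorem. Two points should be made explicit in a final write-up: the starting identity $\sigma(w^{[2]})\equiv0$ is precisely the statement that $I$ maps $\mathrm{Sym}^2(V_{\bf y})$ into the sigma-divisor and deserves a citation (the paper uses it only implicitly, e.g.\ when defining $I^*$); and the nondegeneracy $\sigma_1(w^{[2]})\not\equiv0$, which the paper asserts in lemma \ref{lem4}, needs an argument in your setting --- for instance, if $\sigma_1$ vanished identically on the image, your quadratic identity would force $\sigma_3$ and $\sigma_5$ to vanish there as well, contradicting $W_1\cap W_3\cap W_5=\Lambda$.
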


The following result is used to prove this assertion:
\begin{lemma}\label{lem4} 
The function $\sigma_1(w^{[2]})$ as a function of the points $P_1$ and $P_2$
is not identically equal to zero. 
We have
\begin{align}
X_1+X_2 &=-f_2(w^{[2]}),\label{equ} \\
X_1X_2 &=f_4(w^{[2]}). \label{equ1}
\end{align}
\end{lemma}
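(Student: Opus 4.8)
The plan is to use the fundamental fact that the Abel--Jacobi image of $\mathrm{Sym}^2(V_{\mathbf y})$ lies on the sigma-divisor, so that
\[
\sigma(w^{[2]})\equiv 0
\]
identically in $P_1,P_2$, and then to differentiate this identity by moving one point along the curve. First I would invoke Riemann's theorem in the normalization fixed by the Riemann constants $\delta$ used to define $\sigma$: the image $I(\mathrm{Sym}^2(V_{\mathbf y}))$ is exactly the two-dimensional sigma-divisor $W/\Lambda$, hence $w^{[2]}\in W$ for every $(P_1,P_2)$ and $\sigma(w^{[2]})=0$. This vanishing is the one genuinely nontrivial input, which I would take from the cited theory.

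Next I would regard $\sigma(w^{[2]})$ as a function of $P_1=(X_1,Y_1)$ with $P_2$ fixed, using $X_1$ as a local parameter on $V_{\mathbf y}$ away from branch points. From the explicit holomorphic differentials one reads off $\partial w_1^{[2]}/\partial X_1=-X_1^2/(2Y_1)$, $\partial w_3^{[2]}/\partial X_1=-X_1/(2Y_1)$, $\partial w_5^{[2]}/\partial X_1=-1/(2Y_1)$. Differentiating $\sigma(w^{[2]})\equiv 0$ by the chain rule and clearing the common factor $-1/(2Y_1)$ gives
\[
\sigma_1(w^{[2]})\,X_1^2+\sigma_3(w^{[2]})\,X_1+\sigma_5(w^{[2]})=0,
\]
and the identical computation with $P_2$ moving yields the same relation with $X_1$ replaced by $X_2$. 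Thus $X_1$ and $X_2$ are both roots of the single quadratic $\sigma_1(w^{[2]})X^2+\sigma_3(w^{[2]})X+\sigma_5(w^{[2]})=0$.

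To extract Vieta's formulas I must first establish the non-vanishing assertion $\sigma_1(w^{[2]})\not\equiv 0$. I would argue by contradiction: if $\sigma_1(w^{[2]})\equiv 0$, subtracting the two quadratic relations gives $\sigma_3(w^{[2]})(X_1-X_2)=0$, so $\sigma_3(w^{[2]})=0$ on the dense locus $\{X_1\neq X_2\}$, and then $\sigma_5(w^{[2]})=0$ as well; hence $\sigma_1,\sigma_3,\sigma_5$ would vanish simultaneously on a dense subset of $W$, forcing $W\subseteq W_1\cap W_3\cap W_5=\Lambda$, which is impossible since $W$ is two-dimensional. (Alternatively, expanding $\sigma$ and the $w_i^{[2]}$ near $\infty$ in local parameters $t_i$ with $X_i=t_i^{-2}$, one finds the leading behaviour $\sigma_1(w^{[2]})\sim t_1^2t_2^2(t_1+t_2)$, which is manifestly nonzero.) Granting $\sigma_1(w^{[2]})\neq 0$ on a dense open set, on $\{X_1\neq X_2\}$ the numbers $X_1,X_2$ are precisely the two roots of the monic quadratic $X^2+f_2(w^{[2]})X+f_4(w^{[2]})$, whence $X_1+X_2=-f_2(w^{[2]})$ and $X_1X_2=f_4(w^{[2]})$; being identities of meromorphic functions they extend by continuity to all of $\mathrm{Sym}^2(V_{\mathbf y})$.

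The main obstacle is the first step: justifying $\sigma(w^{[2]})\equiv 0$ and the legitimacy of differentiating it fiberwise along $V_{\mathbf y}$. Once this vanishing and the explicit period differentials are in hand, the quadratic relation and Vieta's formulas are essentially forced, and the only remaining care is the non-vanishing of $\sigma_1(w^{[2]})$, for which either the degeneration near $\infty$ or the intersection fact $W_1\cap W_3\cap W_5=\Lambda$ is decisive.
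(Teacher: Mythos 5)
Your proof is correct, but it takes a genuinely different route from the paper's. The paper deduces the lemma from the genus-3 Jacobi inversion formulas on $\mathrm{Sym}^3(V_{\bf y})$ (cited from \cite{B2}, Theorem 2.4, and \cite{A}): writing $\wp_{1,1},\wp_{1,3},\wp_{1,5}$ as $(\sigma_1\sigma_j-\sigma\sigma_{1,j})/\sigma^2$ and using $I_3^*\wp_{1,1}=X_1+X_2+X_3$, $I_3^*\wp_{1,3}=-(X_1X_2+X_2X_3+X_3X_1)$, $I_3^*\wp_{1,5}=X_1X_2X_3$, it forms the two ratios and lets $X_3\to\infty$; in the limit $\sigma(w^{[2]})=0$, so the $\sigma\sigma_{1,j}$ terms drop out and both formulas appear at once. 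You instead work directly on $\mathrm{Sym}^2$: Riemann's vanishing theorem gives $\sigma(w^{[2]})\equiv0$, fiberwise differentiation along the curve gives the quadratic $\sigma_1(w^{[2]})X_i^2+\sigma_3(w^{[2]})X_i+\sigma_5(w^{[2]})=0$ for $i=1,2$, and Vieta's formulas finish on the locus $X_1\neq X_2$, $\sigma_1(w^{[2]})\neq0$. The trade-offs: the paper's route gets both identities in one stroke from a cited inversion theorem, but treats the non-vanishing of $\sigma_1(w^{[2]})$ only implicitly in the limiting procedure, whereas you isolate that claim and prove it cleanly from the stated fact $W_1\cap W_3\cap W_5=\Lambda$ together with the two-dimensionality of $W$ (to pass from the image of $I$ to all of $W$ you also need Proposition \ref{period}(ii), i.e.\ that vanishing of the $\sigma_i$ on $W$ is $\Lambda$-invariant, which is available). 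Your key imported fact---that $I(\mathrm{Sym}^2(V_{\bf y}))$ is exactly the sigma-divisor for the characteristic $\delta$---is one the paper itself relies on (e.g.\ for the isomorphism (\ref{cong})), and your differentiation step is essentially the same computation the paper performs later when deriving (\ref{equ2}) in the proof of Proposition \ref{prop3} and again in Lemma \ref{expression}; so your argument stays entirely within the paper's toolkit, and it has the added merit of explaining structurally why $X_1,X_2$ are precisely the roots of $X^2+f_2X+f_4$ restricted to the divisor.
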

\begin{proof}
Set
\[
\wp_{i,j}(w)=-\frac{\partial^2}{\partial w_i\partial w_j}\log \sigma(w).
\]
From \cite{B2} theorem 2.4 (see also \cite{A} theorem 7.3) it follows that the Abel--Jacobi map
\[
I_3 \colon \mbox{Sym}^3(V_{\bf y})\to{\rm Jac}(V_{\bf y})
\]
induces a homomorphism $I_3^*$ such that
\[
I_3^*\wp_{1,1}(w)=X_1+X_2+X_3,\;\;I_3^*\wp_{1,3}(w)=-X_1X_2-X_2X_3-X_3X_1,\;\;
I_3^*\wp_{1,5}(w)=X_1X_2X_3.
\]
Since
\[
\wp_{1,1}=\frac{\sigma_1^2-\sigma\sigma_{1,1}}{\sigma^2},\;\;\wp_{1,3}=
\frac{\sigma_1\sigma_3-\sigma\sigma_{1,3}}{\sigma^2},\;\;\wp_{1,5}=
\frac{\sigma_1\sigma_5-\sigma\sigma_{1,5}}{\sigma^2},
\]
we have
\[
\frac{\sigma_1\sigma_3-\sigma\sigma_{1,3}}{\sigma_1^2-\sigma\sigma_{1,1}}=
-\frac{X_1X_2+X_2X_3+X_3X_1}{X_1+X_2+X_3},\;\;\frac{\sigma_1\sigma_5-\sigma\sigma_{1,5}}{\sigma_1^2-\sigma\sigma_{1,1}}=
\frac{X_1X_2X_3}{X_1+X_2+X_3}.\label{inversion}
\]
Calculating the limit as $X_3\to\infty$, we obtain the assertion.
\end{proof}

\begin{proof}[Proof of proposition \ref{prop3}]
The formulas for $\overline{u}_2$ and $\overline{u}_4$ follow from (\ref{equ}) and (\ref{equ1}).
By differentiating (\ref{equ}) with respect to $P_i$, $i=1,2$, we get
\[
-dX_i=f_{2,1}(w^{[2]})\left(-\frac{X_i^2}{2Y_i}dX_i\right)+
f_{2,3}(w^{[2]})\left(-\frac{X_i}{2Y_i}dX_i\right)+f_{2,5}(w^{[2]})\left(-\frac{1}{2Y_i}dX_i\right),
\]
where $f_{2,i}=\partial_if_2$.
Thus, we have the following relation between the meromorphic functions of the points $P_1$ and $P_2$
\begin{equation}
2Y_i=f_{2,1}(w^{[2]})X_i^2+f_{2,3}(w^{[2]})X_i+f_{2,5}(w^{[2]}).\label{equ2}
\end{equation}
Direct calculations yield the formulas
\[
f_{2,1}=f_3-f_1f_2,\quad f_{2,3}=f_5-f_2f_3,\quad f_{2,5}=f_7-f_2g_5.
\]
From lemma \ref{lem4} and (\ref{equ2}), we have
\[
2\overline{u}_5=f_{2,1}(w^{[2]})(X_1+X_2)+f_{2,3}(w^{[2]})=(f_1f_2^2+f_5-2f_2f_3)(w^{[2]}),
\]
\[
4\overline{u}_7=(2f_2^2f_3-2f_3f_4-f_1f_2^3+2f_1f_2f_4-f_2f_5+2f_7-2f_2g_5)(w^{[2]}).
\]
\end{proof}

Let $\mathcal{F}$ be the field of all meromorphic functions on $\mathbb{C}^3$ and
$\mathcal{F}[(\sigma)]$ be the set of meromorphic functions $f\in\mathcal{F}$ satisfying the following two conditions:
\begin{itemize}
\item for any point $w\in W$ there exist its open neighborhood $U\subset\mathbb{C}^3$
and two holomorphic functions $g$ and $h$ on $U$ such that the function
$h$ is not identically equal to zero on $U\cap W$ and $f=g/h$ on $U$;
\item $f(w+\Omega)=f(w)$ for any $w\in W$ and $\Omega\in\Lambda$.
\end{itemize}
Note that $\mathcal{F}[(\sigma)]$ is a subring in $\mathcal{F}$, but it is not a field in general.

We introduce the following meromorphic functions on $\mathbb{C}^3$: 
\[
F_2=-\frac{1}{2}f_2,\quad F_4=\frac{1}{4}f_2^2-f_4,\quad F_5=\frac{1}{2}(f_1f_2^2+f_5-2f_2f_3),
\]
\[
F_7=\frac{1}{4}(2f_2^2f_3-2f_3f_4-f_1f_2^3+2f_1f_2f_4-f_2f_5+2f_7-2f_2g_5).
\]
According to proposition \ref{prop3},  we have $F_2,F_4,F_5,F_7\in\mathcal{F}[(\sigma)]$.
The Abel-Jacobi map $I$ induces a ring homomorphism
\[
I^* \colon \mathcal{F}[(\sigma)]\to\mathcal{F}(\mbox{Sym}^2(V_{\bf y})),\quad f\to f\circ I.
\]
From the definition of $F_i$, it follows that $I^*(F_i)=\overline{u}_i$ for $i=2,4,5,7$.

According to theorem \ref{t-4}, the homomorphism $I^*$ is an epimorphism.
Let $J^*$ be the set of meromorphic functions $f\in\mathcal{F}[(\sigma)]$ such that $f$ is identically equal to zero on $W$.
Thus we have Ker $I^*=J^*$.
Set $\mathcal{F}((\sigma))=\mathcal{F}[(\sigma)]/J^*$.
It is obvious that $\mathcal{F}((\sigma))$ is a field and, by construction, there is an isomorphism of fields
\begin{equation}
\overline{I^*} \;: \;\mathcal{F}((\sigma)) \to \mathcal{F}(\mbox{Sym}^2(V_{\bf y})).\label{cong}
\end{equation}
Let $\overline{F}_i$ be the equivalence class of $F_i$ in $\mathcal{F}((\sigma))$. 
Consider the ring homomorphism
\[
\psi_2 \colon \mathbb{C}[u_2,u_4,u_5,u_7]\to\mathcal{F}((\sigma)),\quad u_i\to \overline{F}_i.
\]
\begin{thm}\label{mainthe}
We have the equality $\Ker\,\psi_2=T$ and, consequently, the isomorphism
\[
\widetilde{\psi}_2\colon \langle\mathbb{C}[u_2,u_4,u_5,u_7]/T\rangle \longrightarrow \mathcal{F}((\sigma)).
\]
\end{thm}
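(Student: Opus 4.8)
The plan is to recognize this theorem as a purely formal consequence of the results already in hand, assembled into a single commutative triangle, so that no new analytic or algebraic computation is required. The three ingredients I would invoke are Lemma \ref{16-2} (which gives $\Ker\psi_1=T$), Theorem \ref{t-4} (so that the embedding $\widetilde{\psi}_1$ is in fact an isomorphism onto $\mathcal{F}(\mbox{Sym}^2(V_{\bf y}))$), and the field isomorphism $\overline{I^*}$ of (\ref{cong}) together with the identities $I^*(F_i)=\overline{u}_i$ recorded just before the statement, which come from Proposition \ref{prop3}.

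First I would set up the triangle
\[
\mathbb{C}[u_2,u_4,u_5,u_7]\xrightarrow{\;\psi_2\;}\mathcal{F}((\sigma))\xrightarrow{\;\overline{I^*}\;}\mathcal{F}(\mbox{Sym}^2(V_{\bf y}))
\]
and claim that the composite $\overline{I^*}\circ\psi_2$ coincides with $\psi_1$, regarded as a map into $\mathcal{F}(\mbox{Sym}^2(V_{\bf y}))$; this corestriction is legitimate because $\im\psi_1$ already lies in that subfield (its generators are the $\overline{u}_i$, which belong to $\mathcal{F}(\mbox{Sym}^2(V_{\bf y}))$ by Theorem \ref{t-4}) and corestriction does not change the kernel. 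Since both maps out of the polynomial ring are ring homomorphisms, it suffices to check agreement on the generators $u_2,u_4,u_5,u_7$. Here $\psi_2(u_i)=\overline{F}_i$ is the class of $F_i$, and $\overline{I^*}$ is by construction the homomorphism induced on the quotient $\mathcal{F}[(\sigma)]/J^*$ by $I^*$; hence $\overline{I^*}(\overline{F}_i)=I^*(F_i)=\overline{u}_i=\psi_1(u_i)$, where the middle equality is exactly the identity obtained from Proposition \ref{prop3}. This establishes $\overline{I^*}\circ\psi_2=\psi_1$.

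With the triangle in place the kernel computation is immediate: because $\overline{I^*}$ is an isomorphism, in particular injective, we have $\Ker\psi_2=\Ker(\overline{I^*}\circ\psi_2)=\Ker\psi_1$, and the right-hand side equals $T$ by Lemma \ref{16-2}. This proves the first assertion.

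For the ``consequently'' part I would argue as follows. Since $\Ker\psi_2=T$ and $\psi_2$ maps into a field, the quotient $\mathbb{C}[u_2,u_4,u_5,u_7]/T$ is an integral domain and $\psi_2$ descends to an injection of this domain into $\mathcal{F}((\sigma))$, which extends uniquely to an injection $\widetilde{\psi}_2$ of quotient fields. To see that $\widetilde{\psi}_2$ is onto, I pass the triangle to quotient fields, obtaining $\overline{I^*}\circ\widetilde{\psi}_2=\widetilde{\psi}_1$; by Theorem \ref{t-4} the map $\widetilde{\psi}_1$ is an isomorphism onto $\mathcal{F}(\mbox{Sym}^2(V_{\bf y}))$, and $\overline{I^*}$ is an isomorphism by (\ref{cong}), so $\widetilde{\psi}_2=(\overline{I^*})^{-1}\circ\widetilde{\psi}_1$ is an isomorphism. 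I expect no genuine obstacle at this stage, since all the substantive work has been absorbed into Lemma \ref{16-2}, Theorem \ref{t-4}, and Proposition \ref{prop3}; the only point demanding care is the bookkeeping that $\overline{I^*}$ is precisely the map induced by $I^*$ on equivalence classes, which is what makes the pointwise identity $\overline{I^*}(\overline{F}_i)=I^*(F_i)=\overline{u}_i$ hold exactly and lets the whole argument collapse to a diagram chase.
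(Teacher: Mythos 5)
Your proposal is correct and takes essentially the same approach as the paper: the paper's entire proof is the one-line remark that the theorem ``follows directly from lemma \ref{16-2}, theorem \ref{t-4}, and formula (\ref{cong})'', and your diagram chase---establishing $\overline{I^*}\circ\psi_2=\psi_1$ via the identities $I^*(F_i)=\overline{u}_i$, transporting the kernel through the injective $\overline{I^*}$, and then passing to quotient fields---is precisely the intended unpacking of that remark. Nothing in your argument deviates from the paper's route; you have simply made the bookkeeping explicit.
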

\begin{proof} The result follows directly from
lemma \ref{16-2}, theorem \ref{t-4}, and formula (\ref{cong}).
\end{proof}

\section{Derivations of $\mathcal{F}(\mbox{Sym}^2(V_{\bf y}))$}
The following two {\it commuting} derivations of the field $\mathbb{C}(X_1,Y_1,X_2,Y_2)$ of rational functions were used in \cite{B}:
\[
\mathcal{L}_3^*=\frac{1}{X_1-X_2}(\mathcal{D}_2-\mathcal{D}_1)\footnote{In \cite{B},  
the operator $-\mathcal{L}_3^*$ (in our notation) was used.},\quad \mathcal{L}_5^*=\frac{1}{X_1-X_2}(X_2\mathcal{D}_1-X_1\mathcal{D}_2),
\]
where
\[
\mathcal{D}_k=2Y_k\partial_{X_k}+Q'(X_k)\partial_{Y_k},\;\;k=1,2.
\]
Since
\[
\mathcal{L}_3^*(Y_k^2-Q(X_k))=0,\;\;\;\mathcal{L}_5^*(Y_k^2-Q(X_k))=0,\;\;\;k=1,2,
\]
the operators $\mathcal{L}_3^*,\mathcal{L}_5^*$ can be regarded as derivations of the field $\mathcal{F}(V_{\bf y}^2)=\langle\mathbb{C}[X_1,Y_1,X_2,Y_2]/J\rangle$.

\begin{thm} {\rm (\cite{B} lemmas 16 and 17)}\label{system1}
In the space $\mathbb{C}^4$ with coordinates $u_2,u_4,u_5,u_7$, we have the following families
of dynamical systems with constant parameters $y_4,y_6,y_8,y_{10}$:
\[
\mathcal{L}_3^*u_2=-u_5,\;\;\mathcal{L}_3^*u_4=-2u_7,
\]
\[
\mathcal{L}_3^*u_5=-35u_2^4-42u_2^2u_4-3u_4^2-2y_4(5u_2^2+u_4)+4y_6u_2-y_8,
\]
\[
\mathcal{L}_3^*u_7=-7(3u_2^5+10u_2^3u_4+3u_2u_4^2)-10y_4(u_2^3+u_2u_4)+2y_6(3u_2^2+u_4)-3y_8u_2+y_{10},
\]
\[
\mathcal{L}_5^*u_2=u_2u_5-u_7,\;\;\mathcal{L}_5^*u_4=2(u_2u_7-u_4u_5),
\]
\[
\mathcal{L}_5^*u_5=u_5^2+14u_2^5-28u_2^3u_4-18u_2u_4^2-8y_4u_2u_4+2y_6(u_2^2+u_4)-2y_8u_2+y_{10},
\]
\[
\mathcal{L}_5^*u_7=-u_5u_7+21u_2^6+35u_2^4u_4-21u_2^2u_4^2-3u_4^3+2y_4(5u_2^4-u_4^2)-
\]
\[
-2y_6(3u_2^3-u_2u_4)+y_8(3u_2^2-u_4)-y_{10}u_2.
\]
\end{thm}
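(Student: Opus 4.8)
The plan is to compute all eight derivatives $\mathcal{L}_3^* u_i$ and $\mathcal{L}_5^* u_i$, $i\in\{2,4,5,7\}$, directly from the definitions, reduce the outcomes in the field $\mathcal{F}(V_{\bf y}^2)=\langle\mathbb{C}[X_1,Y_1,X_2,Y_2]/J\rangle$, and express them through $u_2,u_4,u_5,u_7$ and the parameters. The only facts needed about $\mathcal{D}_k$ are that it is a derivation with $\mathcal{D}_k X_k=2Y_k$, $\mathcal{D}_k Y_k=Q'(X_k)$, and $\mathcal{D}_k X_j=\mathcal{D}_k Y_j=0$ for $j\ne k$, together with $Y_k^2=Q(X_k)$ in the quotient. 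The dictionary for the reduction is $X_1+X_2=2u_2$, $X_1X_2=u_2^2-u_4$, $Y_1+Y_2=2u_7$, $Y_1-Y_2=(X_1-X_2)u_5$, $(X_1-X_2)^2=4u_4$ and $Y_1Y_2=u_7^2-u_4u_5^2$, the relations $X_1X_2=u_2^2-u_4$ and $Y_1Y_2=u_7^2-u_4u_5^2$ being recorded already in the proof of Lemma \ref{lem-2}.

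First I would dispatch the four formulas whose right-hand sides are linear, namely those for $u_2$ and $u_4$. Applying the derivation termwise gives $(\mathcal{D}_2-\mathcal{D}_1)u_2=Y_2-Y_1$ and $(X_2\mathcal{D}_1-X_1\mathcal{D}_2)u_2=X_2Y_1-X_1Y_2$, so after a single division by $X_1-X_2$ one reads off $\mathcal{L}_3^*u_2=-u_5$ and $\mathcal{L}_5^*u_2=u_2u_5-u_7$, the latter via the algebraic identity $(X_1+X_2)(Y_1-Y_2)-(Y_1+Y_2)(X_1-X_2)=2(X_2Y_1-X_1Y_2)$. The computations for $u_4$ are of the same type, using $(\mathcal{D}_2-\mathcal{D}_1)(X_1-X_2)=-2(Y_1+Y_2)$ and $(X_2\mathcal{D}_1-X_1\mathcal{D}_2)(X_1-X_2)=2(X_2Y_1+X_1Y_2)$, and produce $\mathcal{L}_3^*u_4=-2u_7$ and $\mathcal{L}_5^*u_4=2(u_2u_7-u_4u_5)$. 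Here no diagonal singularity survives and the curve relation is not yet needed.

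Next I would treat $\mathcal{L}_3^*u_5$, $\mathcal{L}_3^*u_7$, $\mathcal{L}_5^*u_5$, $\mathcal{L}_5^*u_7$ by the quotient rule applied to $u_5=(Y_1-Y_2)/(X_1-X_2)$ and $u_7=(Y_1+Y_2)/2$, replacing $Y_k^2$ by $Q(X_k)$. The two $\mathcal{L}_3^*$ formulas come out as pure divided differences, $\mathcal{L}_3^*u_7=-\tfrac12\bigl(Q'(X_1)-Q'(X_2)\bigr)/(X_1-X_2)$ and $\mathcal{L}_3^*u_5=\bigl(2(Q(X_1)-Q(X_2))-(Q'(X_1)+Q'(X_2))(X_1-X_2)\bigr)/(X_1-X_2)^3$. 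A short Taylor expansion about the diagonal $X_1=X_2$ shows each numerator is antisymmetric and vanishes to the order that makes the quotient a genuine symmetric polynomial in $X_1,X_2$; I would then rewrite it in $u_2,u_4$ through the power sums $X_1^n+X_2^n$, and matching the coefficients of the explicit $Q$ of degree $7$ produces exactly the stated right-hand sides, in which $y_{12},y_{14}$ cannot appear for reasons of the grading.

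The main obstacle is $\mathcal{L}_5^*u_5$ and $\mathcal{L}_5^*u_7$. Here the quotient rule leaves, besides an $X$-only divided difference, a contribution containing $Y_1Y_2$ over a power of $X_1-X_2$, and the $X$-only divided difference no longer vanishes to full order, so by itself it has a pole along $X_1=X_2$, that is, at $u_4=0$. The resolution is to write $2Y_1Y_2=Q(X_1)+Q(X_2)-(Y_1-Y_2)^2=Q(X_1)+Q(X_2)-4u_4u_5^2$; substituting this makes the two singular contributions combine into a single antisymmetric numerator which, again by a diagonal Taylor expansion, is divisible by $(X_1-X_2)^3$, hence polynomial, while the genuinely $Y$-dependent part is packaged precisely as the monomial $u_5^2$ in $\mathcal{L}_5^*u_5$. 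For $\mathcal{L}_5^*u_7$ the derivative is already a symmetric $X$-polynomial; to exhibit it in the coordinates $u_2,u_4,u_5,u_7$ of $\mathbb{C}^4$ one splits off the term $-u_5u_7$ using the identity $u_5u_7=\bigl(Q(X_1)-Q(X_2)\bigr)/(2(X_1-X_2))$, valid on $\mbox{Sym}^2(V_{\bf y})$. In all four cases the final step is the bookkeeping reduction of a symmetric polynomial in $X_1,X_2$ to $u_2,u_4$ and the verification that the constants match the asserted systems. I expect this pole-cancellation-and-packaging step to be the delicate point, precisely because it forces the computation to be carried out in the quotient $\mathcal{F}(V_{\bf y}^2)$ rather than in the free rational field, where $Y_1^2$ alone is not expressible through the $u_i$.
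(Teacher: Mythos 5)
Your plan is correct, but note what it is being compared against: this paper does not prove Theorem \ref{system1} at all --- the statement is imported verbatim from Lemmas 16 and 17 of \cite{B}, and the paper's entire ``proof'' is that citation. So your computation is a self-contained substitute for the external reference rather than an alternative to an internal argument, and it is essentially the verification the cited source carries out. The structure checks out: the four linear formulas follow from $\mathcal{D}_kX_k=2Y_k$, $\mathcal{D}_kY_k=Q'(X_k)$ together with the polarization identities you quote; $\mathcal{L}_3^*u_5$ and $\mathcal{L}_3^*u_7$ reduce to the divided differences you display, whose numerators are antisymmetric and vanish on the diagonal to orders $3$ and $1$ respectively, so the quotients are symmetric polynomials in $X_1,X_2$ and hence polynomials in $u_2,u_4$; for $\mathcal{L}_5^*u_5$ the substitution $2Y_1Y_2=Q(X_1)+Q(X_2)-(Y_1-Y_2)^2=Q(X_1)+Q(X_2)-4u_4u_5^2$ is exactly what splits off $u_5^2$ and leaves the numerator $(X_2Q'(X_1)+X_1Q'(X_2))(X_1-X_2)-(X_1+X_2)(Q(X_1)-Q(X_2))$, which does vanish to third order on the diagonal; and for $\mathcal{L}_5^*u_7$ the identity $u_5u_7=\bigl(Q(X_1)-Q(X_2)\bigr)/\bigl(2(X_1-X_2)\bigr)$, valid only modulo the curve relations, is what produces the $-u_5u_7$ term. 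A spot check of coefficients (e.g.\ $Q=X^7$ yields $-35u_2^4-42u_2^2u_4-3u_4^2$ for $\mathcal{L}_3^*u_5$ and $14u_2^5-28u_2^3u_4-18u_2u_4^2$ for the polynomial part of $\mathcal{L}_5^*u_5$) confirms the bookkeeping matches the stated right-hand sides.

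One small caveat: your grading argument legitimately excludes $y_{12},y_{14}$ from the $\mathcal{L}_3^*$ formulas, whose degrees are $8$ and $10$, but an analogous shortcut would fail for $\mathcal{L}_5^*u_7$, which has degree $12$: there $y_{12}$ is permitted by the grading, and its absence from the final formula is a genuine cancellation between the divided difference $\bigl(X_2Q'(X_1)-X_1Q'(X_2)\bigr)/\bigl(2(X_1-X_2)\bigr)$ (which does contain $-y_{12}/2$) and the $-u_5u_7$ correction. You scoped the grading claim to the $\mathcal{L}_3^*$ cases, so this is not an error, only a point where the computation cannot be replaced by degree counting.
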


Using theorem \ref{system1}, we can identify the operators $\mathcal{L}_3^*$ and $\mathcal{L}_5^*$
with the derivation operators of the field $\mathcal{F}(\mbox{Sym}^2(V_{\bf y}))$.

\section{Derivation of $\mathcal{F}((\sigma))$}
In this paragraph we will introduce the {\it commuting} derivations $L_3^*$ and $L_5^*$ of the field
$\mathcal{F}((\sigma))$ such that $\mathcal{L}_3^*\circ \overline{I^*}=\overline{I^*}\circ L_3^*$ and
$\mathcal{L}_5^*\circ \overline{I^*}=\overline{I^*}\circ L_5^*$.
We will obtain an expression for these operators in terms of the derivation operators
$\partial_1,\partial_3,\partial_5$ of functions on $\mathbb{C}^3$ with coordinates
$w_1,w_3,w_5$.

Let $\omega$ and $\eta$ be meromorphic 1-forms on $V_{\bf y}$.
Then there exists a unique meromorphic function $f$ on $V_{\bf y}$ such that
$\omega=f\cdot\eta$. We will denote such $f$ by $\omega/\eta$.
Consider the following operators on the field $\mathcal{F}(V_{\bf y}^2)$:
\[
\widetilde{\mathcal{L}_3^*}=\frac{1}{X_1-X_2}\{-2Y_1(d_{P_1}/dX_1)+2Y_2(d_{P_2}/dX_2)\},
\]
\[
\widetilde{\mathcal{L}_5^*}=\frac{1}{X_1-X_2}\{2X_2Y_1(d_{P_1}/dX_1)-2X_1Y_2(d_{P_2}/dX_2)\}.
\]
Let us describe in more detail the action of these operators. Let $g(P_1,P_2)\in \mathcal{F}(V_{\bf y}^2)$.
Fix $P_2$ then $d_{P_1}(g)$ is the total derivative of $g$ as a meromorphic function of $P_1$
and $dX_1$ is the total derivative of the meromorphic function $X_1$ on $V_{\bf y}$.
As noted above, $d_{P_1}(g)/dX_1$ can be regarded as a meromorphic function on $V_{\bf y}$.
Treating the point $P_2$ as a variable, we obtain the function $d_{P_1}(g)/dX_1$ as a meromorphic function on $V_{\bf y}^2$.
Similarly we obtain a meromorphic function $d_{P_2}(g)/dX_2$ on $V_{\bf y}^2$.
Therefore the operator $\widetilde{\mathcal{L}_3^*}$ transforms the meromorphic function $g(P_1,P_2)$ into
the meromorphic function
\[
\frac{1}{X_1-X_2}(-2Y_1(d_{P_1}(g)/dX_1)+2Y_2(d_{P_2}(g)/dX_2)).
\]
The action of $\widetilde{\mathcal{L}_5^*}$ on $\mathcal{F}(V_{\bf y}^2)$ is described similarly.
Using the Leibniz rule for the total derivative, we get that $\widetilde{\mathcal{L}_3^*}$ and $\widetilde{\mathcal{L}_5^*}$ are
the derivations of the field $\mathcal{F}(V_{\bf y}^2)$.

\begin{prop}\label{gh}
We have the equalities $\widetilde{\mathcal{L}_3^*}=\mathcal{L}_3^*$ and
$\widetilde{\mathcal{L}_5^*}=\mathcal{L}_5^*$ as the derivations of $\mathcal{F}(V_{\bf y}^2)$.
\end{prop}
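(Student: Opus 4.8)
The plan is to reduce the proposition to a single operator identity on each factor of $V_{\bf y}^2$. Observe that $\widetilde{\mathcal{L}_3^*}$ and $\widetilde{\mathcal{L}_5^*}$ are obtained from $\mathcal{L}_3^*$ and $\mathcal{L}_5^*$ precisely by replacing each occurrence of $\mathcal{D}_k$ by $2Y_k(d_{P_k}/dX_k)$. Hence it suffices to establish, for $k=1,2$, the single operator identity
\[
2Y_k\,\frac{d_{P_k}}{dX_k}=\mathcal{D}_k=2Y_k\partial_{X_k}+Q'(X_k)\partial_{Y_k}
\]
as derivations of $\mathcal{F}(V_{\bf y}^2)$; substituting it into the two definitions then yields both equalities at once.

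To prove this identity I would fix the other point and regard $g\in\mathcal{F}(V_{\bf y}^2)$ as a meromorphic function of the single point $P_k=(X_k,Y_k)$ on $V_{\bf y}$. Its total differential along the curve is
\[
d_{P_k}(g)=\partial_{X_k}(g)\,dX_k+\partial_{Y_k}(g)\,dY_k,
\]
while differentiating the defining relation $Y_k^2=Q(X_k)$ gives the identity of meromorphic $1$-forms $2Y_k\,dY_k=Q'(X_k)\,dX_k$, that is $dY_k=\tfrac{Q'(X_k)}{2Y_k}\,dX_k$. Substituting this and dividing by $dX_k$ in the sense of the quotient $\omega/\eta$ introduced above, I obtain
\[
\frac{d_{P_k}(g)}{dX_k}=\partial_{X_k}(g)+\frac{Q'(X_k)}{2Y_k}\,\partial_{Y_k}(g),
\]
so that $2Y_k\,(d_{P_k}(g)/dX_k)=\mathcal{D}_k(g)$, as required. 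Substituting back, the bracket defining $\widetilde{\mathcal{L}_3^*}$ becomes $-\mathcal{D}_1+\mathcal{D}_2$, reproducing $\mathcal{L}_3^*$, and the bracket defining $\widetilde{\mathcal{L}_5^*}$ becomes $X_2\mathcal{D}_1-X_1\mathcal{D}_2$, reproducing $\mathcal{L}_5^*$.

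The one point that genuinely requires care — and the closest thing to an obstacle — is the well-definedness of the total-differential formula on the quotient field $\langle\mathbb{C}[X_1,Y_1,X_2,Y_2]/J\rangle$, since the individual partials $\partial_{X_k},\partial_{Y_k}$ are not themselves derivations of this field. The formula is legitimate because the ambiguity in a representative of $g$, namely adding a multiple of $Y_k^2-Q(X_k)$, contributes $d_{P_k}(Y_k^2-Q(X_k))=2Y_k\,dY_k-Q'(X_k)\,dX_k=0$ to the total differential on $V_{\bf y}$; equivalently, a direct computation shows that $\mathcal{D}_k$ annihilates $Y_k^2-Q(X_k)$, which is exactly the fact recorded before Theorem \ref{system1}. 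Since $X_k$ is a nonconstant meromorphic function on $V_{\bf y}$, one has $dX_k\not\equiv0$, so the quotient $d_{P_k}(g)/dX_k$ is a well-defined element of $\mathcal{F}(V_{\bf y}^2)$; the operator identity above then holds on the whole field and the proposition follows.
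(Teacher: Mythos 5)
Your proof is correct, but it is organized differently from the paper's. The paper proves the proposition by evaluating all four operators on the field generators $X_1,Y_1,X_2,Y_2$: from the definitions one reads off, e.g., $\mathcal{L}_3^*X_1=\tfrac{-2Y_1}{X_1-X_2}$ and $\mathcal{L}_3^*Y_1=\tfrac{-Q'(X_1)}{X_1-X_2}$, while the relation $dY_i/dX_i=Q'(X_i)/(2Y_i)$ gives the same values for $\widetilde{\mathcal{L}_3^*}$ and $\widetilde{\mathcal{L}_5^*}$; since all the operators involved are derivations of $\mathcal{F}(V_{\bf y}^2)$, agreement on a generating set forces equality (this last principle is left implicit in the paper). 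You instead isolate the single operator identity $2Y_k\,(d_{P_k}/dX_k)=\mathcal{D}_k$ and prove it for an \emph{arbitrary} function $g$ in one stroke, by expanding the total differential $d_{P_k}(g)=\partial_{X_k}(g)\,dX_k+\partial_{Y_k}(g)\,dY_k$ along the curve and substituting $dY_k=\tfrac{Q'(X_k)}{2Y_k}\,dX_k$; both claimed equalities then follow by direct substitution into the definitions. The two arguments share the same computational heart, namely $2Y_k\,dY_k=Q'(X_k)\,dX_k$ on $V_{\bf y}$, but the deductive scaffolding differs: your version needs no appeal to the ``derivations agreeing on generators coincide'' principle, at the cost of justifying the chain-rule expansion on the quotient field --- which you handle correctly, observing that changing the rational representative of $g$ by a multiple of $Y_k^2-Q(X_k)$ does not affect the restriction of the total differential to the curve. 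One small inaccuracy: the fact recorded before Theorem \ref{system1} is that $\mathcal{L}_3^*$ and $\mathcal{L}_5^*$ annihilate $Y_k^2-Q(X_k)$, not $\mathcal{D}_k$ itself, though the computation $\mathcal{D}_k(Y_k^2-Q(X_k))=2Y_k(-Q'(X_k))+Q'(X_k)(2Y_k)=0$ is immediate and is what underlies both statements. On balance, the paper's check is shorter given that the derivation property of $\widetilde{\mathcal{L}_3^*},\widetilde{\mathcal{L}_5^*}$ was already established; yours is more self-contained and makes explicit why the equality holds on every function rather than only on generators.
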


\begin{proof}
From the definition of the operators $\mathcal{L}_3^*$ and $\mathcal{L}_5^*$ we obtain
\[
\mathcal{L}_3^*X_1=\frac{-2Y_1}{X_1-X_2},\;\;\mathcal{L}_3^*Y_1=\frac{-Q'(X_1)}{X_1-X_2},
\;\;\mathcal{L}_3^*X_2=\frac{2Y_2}{X_1-X_2},\;\;\mathcal{L}_3^*Y_2=\frac{Q'(X_2)}{X_1-X_2},
\]
\[
\mathcal{L}_5^*X_1=\frac{2X_2Y_1}{X_1-X_2},\;\;\mathcal{L}_5^*Y_1=\frac{X_2Q'(X_1)}{X_1-X_2},
\;\;\mathcal{L}_5^*X_2=\frac{-2X_1Y_2}{X_1-X_2},\;\;\mathcal{L}_5^*Y_2=\frac{-X_1Q'(X_2)}{X_1-X_2}.
\]
From the relations $Y_i^2=Q(X_i)$, $i=1,2$, it follows that
\[
\frac{dY_i}{dX_i}=\frac{Q'(X_i)}{2Y_i}.
\]
Thus we get
\[
\widetilde{\mathcal{L}_3^*}X_1=\frac{-2Y_1}{X_1-X_2},\;\;\widetilde{\mathcal{L}_3^*}Y_1=
\frac{-Q'(X_1)}{X_1-X_2},\;\;\widetilde{\mathcal{L}_3^*}X_2=\frac{2Y_2}{X_1-X_2},\;\;
\widetilde{\mathcal{L}_3^*}Y_2=\frac{Q'(X_2)}{X_1-X_2},
\]
\[
\widetilde{\mathcal{L}_5^*}X_1=\frac{2X_2Y_1}{X_1-X_2},\;\;\widetilde{\mathcal{L}_5^*}Y_1=
\frac{X_2Q'(X_1)}{X_1-X_2},\;\;\widetilde{\mathcal{L}_5^*}X_2=\frac{-2X_1Y_2}{X_1-X_2},\;\;
\widetilde{\mathcal{L}_5^*}Y_2=\frac{-X_1Q'(X_2)}{X_1-X_2}.
\]
Therefore $\mathcal{L}_3^*=\widetilde{\mathcal{L}_3^*}$ and $\mathcal{L}_5^*=\widetilde{\mathcal{L}_5^*}$.
\end{proof}

Let us consider a general case.
For any holomorphic function $R(w)\in \mathcal{F}$ on $\mathbb{C}^3$ such that $R_i(w)\not\equiv 0$, where
$R_i=\partial_iR$,\, $i=1,3,5$, we set
\[
Z_R=\{w\in\mathbb{C}^3\;|\;R(w)=0\}.\]
Let $\mathcal{F}[Z_R]$ be the set of meromorphic functions $f\in\mathcal{F}$ such that, for any point $w\in Z_R$, 
there exists an open neighborhood $U\subset\mathbb{C}^3$ of $w$ and two holomorphic functions $g$ and $h$ on $U$ such that the function $h$ 
is not identically equal to zero on $U\cap Z_R$ and $f=g/h$ on $U$. 
Set 
\[\mathcal{F}(Z_R)=\mathcal{F}[Z_R]/\sim,\]
where $f\sim g$ if and only if there exists a function $h\in\mathcal{F}[Z_R]$ such that $f-g=h\cdot R$.

We introduce the following 6 operators
\[
L_{k,\ell}=\partial_k-\frac{R_k}{R_{\ell}}\partial_{\ell},\;\;\;k,\ell=1,3,5,\;\;k\neq\ell.
\]

\begin{lemma}\label{commutator}
{\rm 1.} We have the commutation relations
\[
[L_{1,3},L_{5,3}]=[L_{1,5},L_{3,5}]=[L_{3,1},L_{5,1}]=0
\]
in the Lie algebra of the derivations of $\mathcal{F}$.

{\rm 2.} The operators $L_{k,\ell}$ define the derivations of the ring $\mathcal{F}(Z_R)$.
\end{lemma}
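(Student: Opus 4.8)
The plan is to prove the two parts of Lemma~\ref{commutator} separately, treating both as essentially computational identities in the Lie algebra of derivations of $\mathcal{F}$, then descending to $\mathcal{F}(Z_R)$.

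For part~1, I would compute the commutator $[L_{1,3},L_{5,3}]$ directly. Writing $L_{1,3}=\partial_1-\frac{R_1}{R_3}\partial_3$ and $L_{5,3}=\partial_5-\frac{R_5}{R_3}\partial_3$, the bracket acting on an arbitrary $f\in\mathcal{F}$ expands into terms involving second derivatives of $f$ and first derivatives of the coefficients $R_1/R_3$, $R_5/R_3$. The second-order terms in $f$ must cancel because both operators are first-order, so the surviving expression is a first-order operator whose coefficient is a combination of $\partial_k(R_i/R_3)$. The key point I would verify is that this coefficient vanishes identically: applying the product/quotient rule, each such derivative produces the third-order partials $R_{i,j}=\partial_i\partial_j R$, and since mixed partials are symmetric ($R_{1,5}=R_{5,1}$, etc.) the terms organize into a symmetric combination that cancels. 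Concretely I expect the coefficient of $\partial_3$ to collapse to something proportional to
\[
R_3\bigl(R_{1,5}-R_{5,1}\bigr)+\bigl(\text{terms symmetric in the two upper indices}\bigr)=0,
\]
so that $[L_{1,3},L_{5,3}]=0$. The same computation, with the roles of the indices permuted, gives the other two relations; by symmetry of the construction no separate argument is needed.

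For part~2, I must check that each $L_{k,\ell}$ maps $\mathcal{F}[Z_R]$ into itself and respects the equivalence $\sim$, so that it descends to a well-defined derivation of the quotient $\mathcal{F}(Z_R)=\mathcal{F}[Z_R]/\sim$. The reason $L_{k,\ell}$ is the natural operator here is that $R_k\partial_\ell-R_\ell\partial_k$ annihilates $R$ along $Z_R$: indeed $L_{k,\ell}R=R_k-\frac{R_k}{R_\ell}R_\ell=0$ identically, so $L_{k,\ell}$ is tangent to the hypersurface $Z_R$. For the well-definedness on the quotient, I would take $f\sim g$, i.e.\ $f-g=h\cdot R$ with $h\in\mathcal{F}[Z_R]$, and apply $L_{k,\ell}$ using the Leibniz rule:
\[
L_{k,\ell}(f-g)=L_{k,\ell}(h\cdot R)=\bigl(L_{k,\ell}h\bigr)R+h\cdot L_{k,\ell}R=\bigl(L_{k,\ell}h\bigr)R,
\]
since $L_{k,\ell}R=0$. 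Hence $L_{k,\ell}f\sim L_{k,\ell}g$, provided $L_{k,\ell}h\in\mathcal{F}[Z_R]$.

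The main obstacle—and the step deserving the most care—is verifying that $L_{k,\ell}$ actually preserves the ring $\mathcal{F}[Z_R]$, i.e.\ that the local quotient representation $f=g/h$ (with $h\not\equiv 0$ on $U\cap Z_R$) is stable under the operator. Here the coefficient $R_k/R_\ell$ is itself meromorphic with $R_\ell$ possibly vanishing on parts of $Z_R$, so one must check that differentiating a local representative $g/h$ and multiplying by $R_k/R_\ell$ does not introduce a pole whose denominator vanishes identically on $Z_R$. I would argue this locally: on a neighborhood $U$ of a point where $R_\ell\neq 0$, the operator $L_{k,\ell}$ has holomorphic coefficients and the Leibniz/quotient rule keeps $f$ in the required form; at points of $U\cap Z_R$ where $R_\ell$ vanishes, one exploits that $Z_R$ is smooth (some $R_i\neq 0$ everywhere on $Z_R$, by hypothesis $R_i\not\equiv 0$) to choose an index $\ell$ with $R_\ell\neq 0$ locally, and the well-definedness of the derivation on the quotient is independent of this choice precisely because of the commutation structure and the identity $L_{k,\ell}R=0$ established above. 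Granting this local stability, both claims follow.
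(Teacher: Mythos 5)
Part 1 of your proposal is correct and coincides with the paper's own proof, which is exactly the direct calculation you describe: in $[L_{1,3},L_{5,3}]$ the second-order terms cancel, the bracket reduces to $c\,\partial_3$ with
\[
c=\partial_5\Bigl(\tfrac{R_1}{R_3}\Bigr)-\partial_1\Bigl(\tfrac{R_5}{R_3}\Bigr)
+\tfrac{R_1}{R_3}\,\partial_3\Bigl(\tfrac{R_5}{R_3}\Bigr)-\tfrac{R_5}{R_3}\,\partial_3\Bigl(\tfrac{R_1}{R_3}\Bigr),
\]
and the quotient rule together with $R_{1,5}=R_{5,1}$ gives $c=0$. (Minor slip: the $R_{i,j}$ are second-order, not third-order, partials of $R$.) Likewise, your identity $L_{k,\ell}R=0$ and the Leibniz computation $L_{k,\ell}(hR)=(L_{k,\ell}h)R$ are the right mechanism for descending to $\mathcal{F}(Z_R)=\mathcal{F}[Z_R]/\sim$.

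The genuine gap is in your resolution of what you correctly single out as the main obstacle: that $L_{k,\ell}$ preserves $\mathcal{F}[Z_R]$. First, the parenthetical inference ``$Z_R$ is smooth (some $R_i\neq0$ everywhere on $Z_R$, by hypothesis $R_i\not\equiv0$)'' is a non sequitur, and it is false in the very case the lemma was built for: for $R=\sigma$ with $g=3$, all three derivatives $\sigma_1,\sigma_3,\sigma_5$ vanish on $\Lambda\subset W$, so $W$ is singular precisely along $\Lambda$ (the paper states this in the introduction, and uses $W_1\cap W_3\cap W_5=\Lambda$ in section 4). Second, ``choosing an index $\ell$ with $R_\ell\neq0$ locally'' cannot repair anything: $L_{k,\ell}$ is a single fixed operator, and replacing $\ell$ by $\ell'$ produces a \emph{different} operator $L_{k,\ell'}$; no local choice removes a pole that $L_{k,\ell}f$ genuinely has along $Z_R$. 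Smoothness is not even the relevant property: for $R=e^{w_5}(w_1^2+w_3^2-1)$ the surface $Z_R$ is smooth and all $R_i\not\equiv0$ on $\mathbb{C}^3$, yet $L_{1,5}(w_5)=-R_1/R_5=-2w_1/(w_1^2+w_3^2-1)$ has a pole along all of $Z_R$, so $L_{1,5}$ does not preserve $\mathcal{F}[Z_R]$.

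What the step actually requires --- and what the paper tacitly assumes --- is that $R_\ell$ is not identically zero on $U\cap Z_R$ for small neighborhoods $U$ of points of $Z_R$, i.e.\ that the coefficient $R_k/R_\ell$ itself lies in $\mathcal{F}[Z_R]$. Granting this, the verification is your own computation made explicit: if $f=g/h$ on $U$ with $h\not\equiv0$ on $U\cap Z_R$, then
\[
L_{k,\ell}f=\frac{R_\ell\bigl((\partial_kg)h-g\,\partial_kh\bigr)-R_k\bigl((\partial_\ell g)h-g\,\partial_\ell h\bigr)}{R_\ell h^2},
\]
a quotient of holomorphic functions whose denominator is not identically zero on $U\cap Z_R$; singular points of $Z_R$ cause no difficulty whatsoever. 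For $R=\sigma$ the required condition holds because $W_\ell=\{w\in W\mid\sigma_\ell(w)=0\}$ is a curve, hence a proper analytic subset of the surface $W$. With this substitution your part 2 becomes correct; part 1 needs no change.
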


The proof of the lemma follows from the definition of the operators $L_{k,\ell}$
and direct calculation of the commutators in lemma \ref{commutator}(1).

Let $w^{(0)}=(w_1^{(0)},w_3^{(0)},w_5^{(0)})\in Z_R$ be a point such that $R_1(w^{(0)})\neq0$.
Consider a sufficiently small open neighborhood $U\subset\mathbb{C}^2$ around $(w_3^{(0)},w_5^{(0)})$.
Then, according to the implicit function theorem, there exists a unique holomorphic on $U$ function
$\varphi(w_3,w_5)$ such that
\begin{itemize}
\item $\varphi(w_3^{(0)},w_5^{(0)})=w_1^{(0)}$,
\item $(\varphi(w_3,w_5),w_3,w_5)\in Z_R$ for any point $(w_3,w_5)\in U$.
\end{itemize}

\begin{lemma}\label{deri}
For any function $F\in\mathcal{F}$, we have the formulas
\[
\frac{\partial}{\partial w_3}F(\varphi(w_3,w_5),w_3,w_5)=L_{3,1}(F),\quad \frac{\partial}{\partial w_5}F(\varphi(w_3,w_5),w_3,w_5)=L_{5,1}(F).
\]
\end{lemma}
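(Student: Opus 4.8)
The plan is to verify both formulas by directly differentiating the composite function $F(\varphi(w_3,w_5),w_3,w_5)$ using the chain rule, and then to recognize the resulting expression as the operator $L_{3,1}$ (respectively $L_{5,1}$) applied to $F$. The single key input is the derivative of the implicit function $\varphi$, which follows from differentiating the defining identity $R(\varphi(w_3,w_5),w_3,w_5)=0$.

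First I would differentiate this identity with respect to $w_3$ and $w_5$. Since $R_1(w^{(0)})\neq0$, by continuity $R_1$ stays nonzero on a sufficiently small neighborhood, so we may divide by it. Differentiating $R(\varphi(w_3,w_5),w_3,w_5)\equiv0$ with respect to $w_3$ gives
\[
R_1(\varphi,w_3,w_5)\,\frac{\partial\varphi}{\partial w_3}+R_3(\varphi,w_3,w_5)=0,
\]
hence $\partial\varphi/\partial w_3=-R_3/R_1$, and similarly $\partial\varphi/\partial w_5=-R_5/R_1$, where the partial derivatives of $R$ are evaluated at the point $(\varphi(w_3,w_5),w_3,w_5)\in Z_R$.

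Next I would apply the chain rule to $F(\varphi(w_3,w_5),w_3,w_5)$. Differentiating with respect to $w_3$,
\[
\frac{\partial}{\partial w_3}F(\varphi,w_3,w_5)=F_1\,\frac{\partial\varphi}{\partial w_3}+F_3=F_3-\frac{R_3}{R_1}F_1=\Bigl(\partial_3-\frac{R_3}{R_1}\partial_1\Bigr)F=L_{3,1}(F),
\]
and differentiating with respect to $w_5$,
\[
\frac{\partial}{\partial w_5}F(\varphi,w_3,w_5)=F_1\,\frac{\partial\varphi}{\partial w_5}+F_5=F_5-\frac{R_5}{R_1}F_1=L_{5,1}(F),
\]
where all quantities are understood as evaluated along the graph $(\varphi(w_3,w_5),w_3,w_5)$. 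Substituting the expressions for $\partial\varphi/\partial w_3$ and $\partial\varphi/\partial w_5$ obtained above completes the identification.

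There is no substantial obstacle here; the statement is essentially the chain rule combined with implicit differentiation. The only point demanding a little care is bookkeeping about where each function is evaluated: $L_{3,1}$ and $L_{5,1}$ are a priori operators on $\mathcal{F}$ defined on all of $\mathbb{C}^3$, whereas the left-hand sides are functions of $(w_3,w_5)$ on $U$; the formulas are to be read as an equality after restricting $L_{3,1}(F)$ and $L_{5,1}(F)$ to the graph $\{(\varphi(w_3,w_5),w_3,w_5):(w_3,w_5)\in U\}\subset Z_R$, on which $R_1\neq0$ guarantees that the coefficients $R_3/R_1$ and $R_5/R_1$ are well defined.
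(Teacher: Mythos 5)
Your proposal is correct and follows essentially the same route as the paper: implicit differentiation of $R(\varphi(w_3,w_5),w_3,w_5)\equiv 0$ to obtain $\partial\varphi/\partial w_3=-R_3/R_1$ and $\partial\varphi/\partial w_5=-R_5/R_1$, followed by the chain rule applied to $F(\varphi(w_3,w_5),w_3,w_5)$. The paper's proof is the same computation, stated slightly more tersely (it quotes the derivative formulas for $\varphi$ as following from its definition), and your added remark about where the functions are evaluated is a harmless clarification.
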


\begin{proof}
According to the definition of the function $\varphi$ we have
\[
\frac{\partial \varphi}{\partial w_3}=-\frac{R_3}{R_1}(\varphi(w_3,w_5),w_3,w_5),\quad \frac{\partial \varphi}{\partial w_5}=-\frac{R_5}{R_1}(\varphi(w_3,w_5),w_3,w_5).
\]
Therefore
\[
\frac{\partial}{\partial w_3}F(\varphi(w_3,w_5),w_3,w_5)=-\frac{R_3}{R_1}(\partial_1F)+\partial_3F=L_{3,1}(F),
\]
\[
\frac{\partial}{\partial w_5}F(\varphi(w_3,w_5),w_3,w_5)=-\frac{R_5}{R_1}(\partial_1F)+\partial_5F=L_{5,1}(F).
\]
\end{proof}

\begin{rem}
Another proof of lemma \ref{deri} follows from the relation $[\partial_3,\partial_5]=0$ and lemma \ref{commutator}.
\end{rem}

Now let us return to the case $R(w)=\sigma(w)$, $L_3^*=L_{3,1}$ and $L_5^*=L_{5,1}$, i.e.,
\[
L_3^*=-f_2\partial_1+\partial_3,\quad L_5^*=-f_4\partial_1+\partial_5.
\]

\begin{lemma}\label{expression}

{\rm (i)} Set $h\in\mathcal{F}[(\sigma)]$ then $L_3^*(h),L_5^*(h)\in\mathcal{F}[(\sigma)]$.

\vspace{2ex}

{\rm (ii)} Set $h\in J^*$ then $L_3^*(h),L_5^*(h)\in J^*$.
Therefore we can regard the operators $L_3^*$ and $L_5^*$ as derivations of the field $\mathcal{F}((\sigma))=\mathcal{F}[(\sigma)]/J^*$.

{\rm (iii)} We have the following formulas $\mathcal{L}_3^*\circ \overline{I^*}=\overline{I^*}\circ L_3^*$ and $\mathcal{L}_5^*\circ \overline{I^*}=\overline{I^*}\circ L_5^*$, where $\overline{I^*} \colon \mathcal{F}((\sigma)) \to \mathcal{F}(\mbox{Sym}^2(V_{\bf y}))$ {\rm(see paragraph 4)}.
\end{lemma}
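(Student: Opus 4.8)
The plan is to handle the three parts in order, the unifying idea being to read $L_3^*$ and $L_5^*$ on $W$ as the honest coordinate derivations along the graph parametrization of $W$ furnished by Lemma \ref{deri}, and for part (iii) to compute directly the push-forward of $\mathcal{L}_3^*,\mathcal{L}_5^*$ under the Abel--Jacobi map. First I would record that, with $R=\sigma$ (so $Z_\sigma=W$ and $\sigma_i\not\equiv 0$), one has $\mathcal{F}[(\sigma)]=\{f\in\mathcal{F}[Z_\sigma]\;:\;f(w+\Omega)=f(w)\ \text{for all }w\in W,\ \Omega\in\Lambda\}$. Writing
\[
L_3^*=\tfrac{1}{\sigma_1}(\sigma_1\partial_3-\sigma_3\partial_1),\qquad L_5^*=\tfrac{1}{\sigma_1}(\sigma_1\partial_5-\sigma_5\partial_1),
\]
the bracketed operators are holomorphic vector fields annihilating $\sigma$, hence tangent to $W$. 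Since $\mathcal{F}[(\sigma)]\subset\mathcal{F}[Z_\sigma]$ and, by Lemma \ref{commutator}(2), $L_{3,1},L_{5,1}$ preserve $\mathcal{F}[Z_\sigma]$, the local representability condition for $L_3^*(h)$ and $L_5^*(h)$ holds automatically; only the periodicity condition remains to be checked.

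For periodicity, fix $w^{(0)}\in W$ with $\sigma_1(w^{(0)})\neq 0$ and use the graph $(\varphi(w_3,w_5),w_3,w_5)$ of Lemma \ref{deri}. By Proposition \ref{period}(i) the surface $W$ is $\Lambda$-invariant, so for $\Omega={}^t(\Omega_1,\Omega_3,\Omega_5)\in\Lambda$ uniqueness of $\varphi$ gives $\varphi(w_3+\Omega_3,w_5+\Omega_5)=\varphi(w_3,w_5)+\Omega_1$. Hence the restriction $\Phi(w_3,w_5)=h(\varphi(w_3,w_5),w_3,w_5)$ of $h$ to $W$ satisfies $\Phi(w_3+\Omega_3,w_5+\Omega_5)=h(w+\Omega)=h(w)=\Phi(w_3,w_5)$, using that $h$ is periodic on $W$. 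By Lemma \ref{deri} the restrictions of $L_3^*(h)$ and $L_5^*(h)$ to $W$ equal $\partial_{w_3}\Phi$ and $\partial_{w_5}\Phi$, and partial derivatives of a periodic function are periodic, so $L_3^*(h),L_5^*(h)$ are periodic on $W\setminus W_1$; since both sides are meromorphic and $\sigma_1\not\equiv 0$ on $W$, the identity extends to all of $W$. This gives $L_3^*(h),L_5^*(h)\in\mathcal{F}[(\sigma)]$, proving (i). Part (ii) is then immediate: if $h\in J^*$ then $\Phi\equiv 0$, so $\partial_{w_3}\Phi=\partial_{w_5}\Phi\equiv 0$ and $L_3^*(h),L_5^*(h)$ vanish on $W\setminus W_1$, hence on $W$; combined with (i) this yields $L_3^*(h),L_5^*(h)\in J^*$. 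As $L_3^*,L_5^*$ are derivations of $\mathcal{F}$ preserving the subring $\mathcal{F}[(\sigma)]$ and its ideal $J^*$, they descend to derivations of $\mathcal{F}((\sigma))=\mathcal{F}[(\sigma)]/J^*$.

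For (iii), I would compute the action of $\mathcal{L}_3^*,\mathcal{L}_5^*$ on the components of $w^{[2]}=I((P_1,P_2))$. From the differentials $du_j$ one gets $\mathcal{D}_k(w_1^{[2]})=-X_k^2$, $\mathcal{D}_k(w_3^{[2]})=-X_k$, $\mathcal{D}_k(w_5^{[2]})=-1$, and therefore
\[
\mathcal{L}_3^*(w_1^{[2]})=X_1+X_2,\quad \mathcal{L}_3^*(w_3^{[2]})=1,\quad \mathcal{L}_3^*(w_5^{[2]})=0,
\]
\[
\mathcal{L}_5^*(w_1^{[2]})=-X_1X_2,\quad \mathcal{L}_5^*(w_3^{[2]})=0,\quad \mathcal{L}_5^*(w_5^{[2]})=1.
\]
By Lemma \ref{lem4}, $X_1+X_2=-f_2(w^{[2]})$ and $X_1X_2=f_4(w^{[2]})$, so the chain rule gives, for any $F\in\mathcal{F}[(\sigma)]$,
\[
\mathcal{L}_3^*(F\circ I)=(-f_2\partial_1F+\partial_3F)\circ I=(L_3^*F)\circ I,\qquad \mathcal{L}_5^*(F\circ I)=(-f_4\partial_1F+\partial_5F)\circ I=(L_5^*F)\circ I.
\]
Since $\overline{I^*}$ is induced by $F\mapsto F\circ I$, this is precisely $\mathcal{L}_3^*\circ\overline{I^*}=\overline{I^*}\circ L_3^*$ and $\mathcal{L}_5^*\circ\overline{I^*}=\overline{I^*}\circ L_5^*$.

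The main obstacle is the periodicity step in (i): the hypothesis provides periodicity of $h$ only along $W$, whereas $L_3^*,L_5^*$ differentiate in ambient directions, so one cannot simply differentiate a periodicity identity on $\mathbb{C}^3$. Reducing these operators to genuine partial derivatives of $h|_W$ through the graph parametrization of Lemma \ref{deri} is what makes periodicity manifest; the only extra care needed is the passage across the locus $W_1=\{\sigma_1=0\}$, which is handled by the density of $W\setminus W_1$ in $W$ together with the meromorphy of both sides.
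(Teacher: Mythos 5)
Your proof is correct, but for parts (i) and (ii) it follows a genuinely different route from the paper. The paper derives all three parts from a single computation on the curve side: using Proposition \ref{gh} it evaluates $\mathcal{L}_3^*\circ I^*(h)$ by the chain rule through the Abel--Jacobi map, obtaining $(X_1+X_2)h_1(w^{[2]})+h_3(w^{[2]})=I^*(-f_2h_1+h_3)=I^*(L_3^*h)$; then (i) follows because $\mathcal{L}_3^*$ maps $\mathcal{F}(\mbox{Sym}^2(V_{\bf y}))$ into itself (so the restriction of $L_3^*h$ to $W$ is a pullback of a function on $\mbox{Sym}^2(V_{\bf y})$, hence periodic), (ii) follows because $I^*h=0$ forces $\mathcal{L}_3^*(I^*h)=0$, and (iii) is the displayed identity. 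You instead prove (i) and (ii) \emph{intrinsically} on $W$: local representability from Lemma \ref{commutator}(2), and periodicity by identifying $L_3^*(h)|_W$ and $L_5^*(h)|_W$ with partial derivatives of the periodic restriction $\Phi=h(\varphi(\cdot,\cdot),\cdot,\cdot)$ via Lemma \ref{deri}, never invoking the curve, Proposition \ref{gh}, or Theorem \ref{t-4}; your part (iii) then reproduces essentially the paper's chain-rule computation (your evaluation of $\mathcal{D}_k(w_i^{[2]})$ is exactly the content of Proposition \ref{gh}, which you could cite instead of rederiving). The trade-off: the paper's argument is more economical but leans on the $\mbox{Sym}^2$ machinery, in particular on $\mathcal{L}_3^*,\mathcal{L}_5^*$ preserving $\mathcal{F}(\mbox{Sym}^2(V_{\bf y}))$; yours makes the statement that $L_3^*,L_5^*$ descend to derivations of $\mathcal{F}((\sigma))$ a purely local/periodicity fact, depending only on Proposition \ref{period} and Lemmas \ref{commutator}, \ref{deri}, and hence applicable to any $\Lambda$-invariant hypersurface whose defining function has quasi-periodic gradient. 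One small imprecision to patch: $\varphi$ is only locally defined, so the identity $\varphi(w_3+\Omega_3,w_5+\Omega_5)=\varphi(w_3,w_5)+\Omega_1$ should be stated as relating the local branch at $w^{(0)}$ to the (unique) local branch at $w^{(0)}+\Omega$, which exists because $\sigma_1(w^{(0)}+\Omega)=A\,\sigma_1(w^{(0)})\neq 0$ by Proposition \ref{period}(ii); differentiating that relation between the two branches is what makes ``derivatives of a periodic function are periodic'' rigorous here.
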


\begin{proof}
Set $h\in\mathcal{F}[(\sigma)]$. According to proposition \ref{gh}, we have
\begin{multline*}
\mathcal{L}_3^*\circ I^*(h)=\mathcal{L}_3^*(h(w^{[2]}))
=\frac{1}{X_1-X_2}(-2Y_1(d_{P_1}(h(w^{[2]}))/dX_1)+2Y_2(d_{P_2}(h(w^{[2]}))/dX_2)) \\
=\frac{1}{X_1-X_2}\left(-2Y_1\left(-\frac{X_1^2}{2Y_1}h_1(w^{[2]})-\frac{X_1}{2Y_1}h_3(w^{[2]})-
\frac{1}{2Y_1}h_5(w^{[2]})\right)\right)\\
+\frac{1}{X_1-X_2}\left(2Y_2\left(-\frac{X_2^2}{2Y_2}h_1(w^{[2]})-\frac{X_2}{2Y_2}h_3(w^{[2]})-
\frac{1}{2Y_2}h_5(w^{[2]})\right)\right)\\
=(X_1+X_2)h_1(w^{[2]})+h_3(w^{[2]}),
\end{multline*}
where $h_i=\partial_ih$. Since the operator $\mathcal{L}_3^*$ transforms
$\mathcal{F}(\mbox{Sym}^2(V_{\bf y}))$ into itself, we get
\[
(X_1+X_2)h_1(w^{[2]})+h_3(w^{[2]})\in\mathcal{F}(\mbox{Sym}^2(V_{\bf y})).
\]
Since $-(X_1+X_2)=f_2(w^{[2]})$, we have $L_3^*(h)=-f_2h_1+h_3\in\mathcal{F}[(\sigma)]$.
Therefore we obtain a proof of (i) for the operator $L_3^*$. 
Set $h\in J^*$, then $(X_1+X_2)h_1(w^{[2]})+h_3(w^{[2]})=0$ as an element of $\mathcal{F}(\mbox{Sym}^2(V_{\bf y}))$.
This means that $L_3^*(h)=-f_2h_1+h_3\in J^*$. Therefore we obtain a proof of (ii) for the operator $L_3^*$.
We have
\[
I^*\circ L_3^*(h)=I^*(-f_2h_1+h_3)=(X_1+X_2)h_1(w^{[2]})+h_3(w^{[2]}).
\]
Therefore, $\mathcal{L}_3^*\circ I^*(h)=I^*\circ L_3^*(h)$.
This proves (iii) for the operator $L_3^*$.

The lemma's assertions for the operator $L_5^*$ are proved similarly.
\end{proof}

\section{Dynamical systems on $\mathbb{C}^4$ integrable by the functions on the sigma-divisor}

For any constant vector $(y_4,y_6,y_8,y_{10})\in\mathbb{C}^4$, we consider the following dynamical systems
on $\mathbb{C}^4$ with coordinates $(G_2,G_4,G_5,G_7)$

(I)
\[
\frac{\partial G_2}{\partial t}=-G_5,\quad \frac{\partial G_4}{\partial t}=-2G_7,
\]
\[
\frac{\partial G_5}{\partial t}=-35G_2^4-42G_2^2G_4-3G_4^2-2y_4(5G_2^2+G_4)+4y_6G_2-y_8,
\]
\[
\frac{\partial G_7}{\partial t}=-7(3G_2^5+10G_2^3G_4+3G_2G_4^2)-1
0y_4(G_2^3+G_2G_4)+2y_6(3G_2^2+G_4)-3y_8G_2+y_{10},
\]

\vspace{1ex}

(II)
\[
\frac{\partial G_2}{\partial \tau}=G_2G_5-G_7,\quad \frac{\partial G_4}{\partial \tau}=2(G_2G_7-G_4G_5),
\]
\[
\frac{\partial G_5}{\partial \tau}=G_5^2+14G_2^5-28G_2^3G_4-18G_2G_4^2-8y_4G_2G_4+2y_6(G_2^2+G_4)-2y_8G_2+y_{10},
\]
\begin{multline*}
\frac{\partial G_7}{\partial \tau}=-G_5G_7+21G_2^6+35G_2^4G_4-21G_2^2G_4^2-3G_4^3+2y_4(5G_2^4-G_4^2)- \\
-2y_6(3G_2^3-G_2G_4)+y_8(3G_2^2-G_4)-y_{10}G_2.
\end{multline*}
Consider the following polynomials on $\mathbb{C}^4$:
\begin{multline*}
I_{12}(G_2,G_4,G_5,G_7)=2G_5G_7-7G_2^6-35G_2^4G_4-21G_2^2G_4^2-G_4^3-y_4(5G_2^4+10G_2^2G_4+G_4^2) \\
+4y_6(G_2^3+G_2G_4)-y_8(3G_2^2+G_4)+2y_{10}G_2,
\end{multline*}
\begin{multline*}
I_{14}(G_2,G_4,G_5,G_7)=-G_7^2-G_4G_5^2+2G_2G_5G_7-6G_2^7-14G_2^5G_4+14G_2^3G_4^2+6G_2G_4^3 \\
-4y_4(G_2^5-G_2G_4^2)+y_6(3G_2^4-2G_2^2G_4-G_4^2)-2y_8(G_2^3-G_2G_4)+y_{10}(G_2^2-G_4).
\end{multline*}

\begin{thm}
For any constant vector $(y_4,y_6,y_8,y_{10})\in\mathbb{C}^4$, any solution $(G_2,G_4,G_5,G_7)$ of the dynamical systems {\rm(I}) and {\rm(II)} has the polynomial integrals $I_i,\;i=12,14$, i.e., 
\[
\frac{\partial}{\partial t}I_i(G_2(t,\tau),G_4(t,\tau),G_5(t,\tau),G_7(t,\tau))=0,
\]
\[
\frac{\partial}{\partial\tau}I_i(G_2(t,\tau),G_4(t,\tau),G_5(t,\tau),G_7(t,\tau))=0.
\]
\end{thm}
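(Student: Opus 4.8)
The plan is to show that $I_{12}$ and $I_{14}$ are integrals by directly verifying that their time derivatives vanish along solutions, and the cleanest route is to recognize that these polynomials are precisely the generators $H_{12}, H_{14}$ of the ideal $T$ transplanted to the coordinates $(G_2,G_4,G_5,G_7)$. Indeed, comparing the displayed formulas for $H_{12}(u_2,u_4,u_5,u_7)$ and $H_{14}(u_2,u_4,u_5,u_7)$ with $I_{12}$ and $I_{14}$, one sees they are identical polynomials with $u_i$ replaced by $G_i$. Likewise, systems (I) and (II) are obtained from the dynamical systems of Theorem \ref{system1} by replacing $\mathcal{L}_3^* u_i$ with $\partial G_i/\partial t$ and $\mathcal{L}_5^* u_i$ with $\partial G_i/\partial\tau$, and relabeling $u_i$ as $G_i$.

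First I would make this identification precise: for a solution $(G_2(t,\tau),\dots,G_7(t,\tau))$, the chain rule gives
\[
\frac{\partial}{\partial t}I_i = \sum_{j\in\{2,4,5,7\}}\frac{\partial I_i}{\partial G_j}\frac{\partial G_j}{\partial t},
\]
and along the solution $\partial G_j/\partial t$ equals the right-hand side of system (I), which is exactly $\mathcal{L}_3^* u_j$ written in $G$-coordinates. Hence $\partial I_i/\partial t$ equals the polynomial $\mathcal{L}_3^*(H_{i})$ evaluated at $(G_2,G_4,G_5,G_7)$, where $\mathcal{L}_3^*$ is the derivation from Theorem \ref{system1} acting on $\mathbb{C}[u_2,u_4,u_5,u_7]$. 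The same reasoning applies to $\partial I_i/\partial\tau$ with $\mathcal{L}_5^*$.

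Therefore the theorem reduces to the purely algebraic identities $\mathcal{L}_3^*(H_{12})=\mathcal{L}_3^*(H_{14})=\mathcal{L}_5^*(H_{12})=\mathcal{L}_5^*(H_{14})=0$ as elements of $\mathbb{C}[u_2,u_4,u_5,u_7]$. The conceptual reason these vanish is that, by Lemma \ref{16-2}, $H_{12}$ and $H_{14}$ generate $T=\Ker\,\psi_1$, so they vanish in $\mathcal{F}(V_{\bf y}^2)$; and by Proposition \ref{gh}, $\mathcal{L}_3^*$ and $\mathcal{L}_5^*$ are the derivations $\widetilde{\mathcal{L}_3^*}, \widetilde{\mathcal{L}_5^*}$ of $\mathcal{F}(V_{\bf y}^2)$ built from $\mathcal{D}_k$, which annihilate $Y_k^2-Q(X_k)$ and hence preserve the relations defining $V_{\bf y}^2$. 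Concretely, since $H_{12}$ and $H_{14}$ are (up to the factor $X_1-X_2$) combinations of $Y_k^2-Q(X_k)$, and $\mathcal{L}_3^*, \mathcal{L}_5^*$ kill each $Y_k^2-Q(X_k)$, one computes that $\mathcal{L}_3^* H_{12}$ and the others lie in the ideal $(H_{12},H_{14})=T$ — but being of lower degree than forces them, via the homogeneity grading, to be zero.

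The main obstacle is making the vanishing rigorous rather than merely plausible: a priori $\mathcal{L}_3^*(H_{12})$ need only lie in $T$ (since $\mathcal{L}_3^*$ preserves the kernel of $\psi_1$), not vanish identically as a polynomial. To close this gap I would either (a) carry out the direct polynomial computation, substituting the explicit right-hand sides of systems (I) and (II) into $\sum_j (\partial I_i/\partial G_j)(\partial G_j/\partial t)$ and checking the result collapses to zero after collecting terms by degree in the $y$-parameters; or (b) argue that the grading $\deg G_{2k}=2k$, $\deg y_{2i}=2i$ makes $H_{12}$ homogeneous of degree $12$ and $H_{14}$ of degree $14$, while $\mathcal{L}_3^*$ raises degree by $3$ and $\mathcal{L}_5^*$ by $5$; one then checks that no nonzero element of $T$ exists in the resulting degrees that is consistent with $\mathcal{L}_3^* H_{12}$ being a derivation image, forcing it to vanish. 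Route (a) is the safe, self-contained verification and is the one I would present, noting that the generating-series machinery of Lemma \ref{lem-2} guarantees the substitutions stay polynomial; route (b) is a cleaner conceptual explanation of \emph{why} the computation must succeed.
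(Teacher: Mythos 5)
Your route (a) is exactly the paper's proof: the paper establishes the theorem by direct calculation, substituting the right-hand sides of (I) and (II) into the chain rule and checking that the resulting polynomials are identically zero. Since route (a) is the one you commit to presenting, your proposal matches the paper's approach.

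Two corrections to the surrounding discussion, however. First, a minor one: $I_{12}$ and $I_{14}$ are not literally identical to $H_{12}$ and $H_{14}$; they differ by the constants $y_{12}$ and $y_{14}$, i.e.\ $I_{12}=H_{12}+y_{12}$ and $I_{14}=H_{14}+y_{14}$. This is harmless, since derivations annihilate constants, but the identification should be stated that way. Second, and more substantively: your fallback route (b) does not close the gap you correctly identified. With the grading $\deg u_i=i$, $\deg y_{2j}=2j$, the four quantities $\mathcal{L}_3^*H_{12}$, $\mathcal{L}_3^*H_{14}$, $\mathcal{L}_5^*H_{12}$, $\mathcal{L}_5^*H_{14}$ are homogeneous of degrees $15$, $17$, $17$, $19$ respectively. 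Degree counting forces vanishing only in the first case: since no nonzero homogeneous polynomial in $u_2,u_4,u_5,u_7$ and the $y$'s has degree $1$ or $3$, the ideal $T$ contains no nonzero element of degree $15$. But $T$ does contain nonzero homogeneous elements of degree $17$ (the multiples of $u_5H_{12}$) and of degree $19$ (spanned by $u_7H_{12}$, $u_2u_5H_{12}$, $u_5H_{14}$), so the grading alone cannot exclude, say, $\mathcal{L}_3^*H_{14}=c\,u_5H_{12}$ with $c\neq0$; only the explicit computation shows $c=0$. Thus route (a) is not merely the "safe" choice --- of your two routes it is the only one that actually proves the theorem, and it coincides with the paper's proof.
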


\begin{proof}
The direct calculation of the partial derivatives of the polynomials
\[\widehat{I}_i(t,\tau)=I_i(G_2(t,\tau),G_4(t,\tau),G_5(t,\tau),G_7(t,\tau))\]
in solutions of systems (I) and (II) shows that they are identically equal to zero.
\end{proof}

Consider a constant vector $(y_4,y_6,y_8,y_{10},y_{12},y_{14})\in B$.
Take a point $w^{(0)}=(w_1^{(0)},w_3^{(0)},w_5^{(0)})\in W$ such that $\sigma_1(w^{(0)})\neq0$.
Then in a sufficiently small open neighborhood $U\subset\mathbb{C}^2$ around $(w_3^{(0)},w_5^{(0)})$,
there exists a uniquely determined holomorphic function $\varphi(w_3,w_5)$ on $U$ such that
\[
\varphi(w_3^{(0)},w_5^{(0)})=w_1^{(0)} \; \text{ and }\; (\varphi(w_3,w_5),w_3,w_5)\in W\;
\text{ for any point }\; (w_3,w_5)\in U.
\]

Let $F_i(w_1,w_3,w_5),\;i=2,4,5,7$, be the functions on the sigma-divisor introduced in paragraph 4.
Set $G_i(t,\tau)=F_i(\varphi(t,\tau),t,\tau)$.

\begin{thm}\label{dynamical}
The functions
\[
G_i(t,\tau)=F_i(\varphi(t,\tau),t,\tau),\;\;i=2,4,5,7,
\]
satisfy the dynamical systems {\rm (I)} and {\rm (II)}.
\end{thm}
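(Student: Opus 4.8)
The plan is to show that $G_i(t,\tau)$ satisfies systems (I) and (II) by identifying the time-derivatives $\partial/\partial t$ and $\partial/\partial\tau$ with the action of the operators $L_3^*$ and $L_5^*$ on the functions $F_i$, and then transporting the already-established dynamical equations of Theorem \ref{system1} through the field isomorphism $\overline{I^*}$. First I would invoke Lemma \ref{deri} in the case $R=\sigma$: since $G_i(t,\tau)=F_i(\varphi(t,\tau),t,\tau)$ is exactly the restriction of $F_i$ to the graph of $\varphi$ over the sigma-divisor, the lemma gives directly
\[
\frac{\partial}{\partial t}G_i(t,\tau)=L_{3,1}(F_i)(\varphi(t,\tau),t,\tau)=L_3^*(F_i)(\varphi(t,\tau),t,\tau),
\]
and similarly $\partial G_i/\partial\tau=L_5^*(F_i)(\varphi(t,\tau),t,\tau)$. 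Thus the whole problem reduces to computing $L_3^*(F_i)$ and $L_5^*(F_i)$ as elements of $\mathcal{F}((\sigma))$ and checking that they equal the right-hand sides of (I) and (II) with $(G_2,G_4,G_5,G_7)$ replaced by $(F_2,F_4,F_5,F_7)$.

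Next I would carry out that reduction abstractly rather than by brute force. By Lemma \ref{expression}(iii) the operators $L_3^*,L_5^*$ on $\mathcal{F}((\sigma))$ are intertwined with $\mathcal{L}_3^*,\mathcal{L}_5^*$ on $\mathcal{F}(\mathrm{Sym}^2(V_{\bf y}))$ via the isomorphism $\overline{I^*}$, and by Proposition \ref{prop3} together with the definitions of $F_i$ we have $\overline{I^*}(\overline{F}_i)=\overline{u}_i$ for $i=2,4,5,7$. Therefore applying $\overline{I^*}$ to $L_3^*(\overline{F}_i)$ yields $\mathcal{L}_3^*(\overline{u}_i)$, and the right-hand sides of these are given \emph{explicitly} by Theorem \ref{system1} as polynomials in $u_2,u_4,u_5,u_7$ with parameters $y_4,\dots,y_{10}$. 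Since $\overline{I^*}$ is an isomorphism of fields and the polynomial expressions $\overline{u}_i$ freely realize the $u_i$ modulo the ideal $T$ (Theorem \ref{mainthe}), each identity $\mathcal{L}_3^*\overline{u}_i=P_i(\overline{u})$ pulls back to the corresponding identity $L_3^*\overline{F}_i=P_i(\overline{F})$ in $\mathcal{F}((\sigma))$, and likewise for $L_5^*$. These are precisely the defining equations of systems (I) and (II) written in the variables $\overline{F}_i$.

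Finally I would descend from the field $\mathcal{F}((\sigma))$ back to honest functions on the graph of $\varphi$. An equation such as $L_3^*\overline{F}_5=-35\overline{F}_2^4-\cdots-y_8$ in $\mathcal{F}((\sigma))=\mathcal{F}[(\sigma)]/J^*$ means the two representatives differ by an element of $J^*$, i.e.\ by a function vanishing identically on $W$; restricting to the point $(\varphi(t,\tau),t,\tau)\in W$ kills that discrepancy, so the equation holds as an identity of holomorphic functions of $(t,\tau)$ on $U$. Combining this with the first paragraph's identification of $\partial_t G_i$ and $\partial_\tau G_i$ gives exactly systems (I) and (II) for the $G_i$, completing the proof. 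The main obstacle I anticipate is bookkeeping at the level of representatives: one must be careful that $L_3^*,L_5^*$ are well defined as derivations of the quotient field (guaranteed by Lemma \ref{expression}(ii)) and that the polynomial identities from Theorem \ref{system1}, which live on $\mathrm{Sym}^2(V_{\bf y})$, transfer cleanly through $\overline{I^*}$ without introducing extra terms from the ideal $T$ or from the locus $W_1$ where $\sigma_1$ vanishes; restricting to a neighborhood where $\sigma_1(w^{(0)})\neq0$ is what makes all the denominators $\sigma_1$ in the $F_i$ and in $L_3^*,L_5^*$ harmless.
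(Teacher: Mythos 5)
Your proposal is correct and follows exactly the paper's own (very terse) argument: the paper proves Theorem \ref{dynamical} by citing precisely the three ingredients you use — Lemma \ref{deri} to identify $\partial_t,\partial_\tau$ of the restricted functions with $L_3^*,L_5^*$, Lemma \ref{expression}(iii) to transport Theorem \ref{system1}'s identities through the isomorphism $\overline{I^*}$, and the observation that equalities in $\mathcal{F}((\sigma))$ hold pointwise on $W$. Your write-up simply makes explicit the bookkeeping (injectivity of $\overline{I^*}$, vanishing of $J^*$ on the graph of $\varphi$, harmlessness of the $\sigma_1$ denominators) that the paper leaves implicit.
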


The proof of this theorem follows immediately from
theorem \ref{system1}, lemma \ref{deri} and lemma \ref{expression}.

\section{Rational limit}
Let the constant vector ${\bf y}\in\mathbb{C}^6$ tend to zero.
Then according to theorem \ref{rationallim}, the sigma-function $\sigma(w_1,w_3,w_5)$ passes
in the Shur--Weierstrass polynomial (see \cite{BEL-99-R})
\[
\sigma=w_1w_5-w_3^2-\frac{1}{3}w_1^3w_3+\frac{1}{45}w_1^6.
\]
As a result, we obtain
\[
\sigma_1=w_5-w_1^2w_3+\frac{2}{15}w_1^5,\quad \sigma_3=-2w_3-\frac{1}{3}w_1^3,\quad \sigma_5=w_1,
\]
\[
\sigma_{11}=-2w_1w_3+\frac{2}{3}w_1^4,\quad \sigma_{13}=-w_1^2,\quad \sigma_{15}=1,
\quad \sigma_{33}=-2,\quad \sigma_{35}=0.
\]

Let $p\in\mathbb{C}$ be such that $p^6=-45p$ and $w^{(0)}=(p,0,1)$. Then
$w^{(0)}\in W$,\, $\sigma_1(w^{(0)})=1$ for $p=0$ and $\sigma_1(w^{(0)})=-5$ for $p\neq0$.
Take a sufficiently small open neighborhood $U\subset\mathbb{C}^2$ around $(0,1)$.
Then according to the implicit function theorem
there exists a unique holomorphic function $\varphi(w_3,w_5)$ on $U$ such that
\[
\varphi(0,1)=p\; \text{ and }\; (\varphi(w_3,w_5),w_3,w_5)\in W\; \text{ for any point } (w_3,w_5)\in U.
\]
Set $w_3=t$ and $w_5-1=\tau$. Then from the relation
\begin{equation}
\varphi(w_3,w_5)w_5-w_3^2-\frac{1}{3}\varphi(w_3,w_5)^3w_3+\frac{1}{45}\varphi(w_3,w_5)^6=0,\label{ratrel}
\end{equation}
we obtain the recurrence relation
\[
\varphi(t,1+\tau)=t^2-\tau\varphi(t,1+\tau)+\frac{1}{3}t\varphi(t,1+\tau)^3-\frac{1}{45}\varphi(t,1+\tau)^6.
\]
Therefore
\[
\varphi(t,1+\tau)=\sum_{k,\ell\ge0}\gamma_{k,\ell}t^k\tau^{\ell},\quad
\gamma_{k,\ell}\in\mathbb{Z}\left[p,\frac{1}{3},\frac{1}{5}\right].
\]
From the definitions of $F_2,F_4,F_5,F_7$ (see paragraph 4) taking into account the relation (\ref{ratrel}), we have
\[
F_2(\varphi(w_3,w_5),w_3,w_5)=(5/K_2)(\varphi^3+6w_3),
\]
\[F_4(\varphi(w_3,w_5),w_3,w_5)=(15/K_4)(-\varphi^3w_3+15\varphi w_5-15w_3^2),
\]
\[
F_5(\varphi(w_3,w_5),w_3,w_5)=(1/K_5)(8\varphi^5w_5+3\varphi^4w_3^2-15\varphi^2w_3w_5-45\varphi w_3^3-15w_5^2),
\]
\[
F_7(\varphi(w_3,w_5),w_3,w_5)=(10125/(2K_7))(15\varphi^5w_3w_5-50\varphi^4w_3^3-25\varphi^3w_5^2+129\varphi^2w_3^2w_5-111\varphi w_3^4),
\]
where
\[
K_2=2(2\varphi^5-15\varphi^2w_3+15w_5),\quad K_4=4(-8\varphi^5w_5+27\varphi^4w_3^2-30\varphi^2w_3w_5+15w_5^2),
\]
\[
K_5=3(14\varphi^5w_5^2-111\varphi^4w_3^2w_5+189\varphi^3w_3^4+
165\varphi^2w_3w_5^2-585\varphi w_3^3w_5+405w_3^5+5w_5^3),
\]
\begin{multline*}
K_7=729\varphi^5w_3^5-208\varphi^5w_5^3+3042\varphi^4w_3^2w_5^2-11583\varphi^3w_3^4w_5+2187\varphi^2w_3^6- \\
-4380\varphi^2w_3w_5^3+28620\varphi w_3^3w_5^2-24300w_3^5w_5+15w_5^4.
\end{multline*}

\begin{thm}\label{rationaldy}
The set of functions $(G_2,G_4,G_5,G_7)$, where
\[
G_i(t,\tau)=F_i(\varphi(t,1+\tau),t,1+\tau),\;\;i=2,4,5,7,
\]
is a solution of the dynamical systems {\rm (I)} and {\rm (II)} with $y_4=y_6=y_8=y_{10}=0$.
\end{thm}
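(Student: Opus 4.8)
The plan is to deduce theorem \ref{rationaldy} from theorem \ref{dynamical} by a specialization argument in the parameter ${\bf y}$, exploiting the fact that, although the sigma-divisor construction degenerates as ${\bf y}\to 0$, every function entering the statement continues to depend holomorphically on ${\bf y}$ up to and including ${\bf y}=0$. Thus the rational case is recovered as the harmless boundary value of a holomorphic family on which the dynamical equations already hold on a dense set of parameters.

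First I would record the analytic dependence on ${\bf y}$. By theorem \ref{rationallim} the Taylor coefficients of $\sigma(w;{\bf y})$ are polynomials in ${\bf y}$, so $\sigma$ and all its derivatives $\sigma_i,\sigma_{i,j}$ are holomorphic in $(w,{\bf y})$ near $\bigl((p,0,1),0\bigr)$. Since $\sigma_1\bigl((p,0,1);0\bigr)\in\{1,-5\}$ is nonzero, the quotients $f_1,f_2,\dots,g_5$, and hence the functions $F_2,F_4,F_5,F_7$, are holomorphic in $(w,{\bf y})$ near this point. Applying the implicit function theorem with the parameter ${\bf y}$ to $\sigma(w_1,w_3,w_5;{\bf y})=0$ at the base point $(p,0,1)$, where $\sigma_1\neq0$, produces a function $\varphi(w_3,w_5;{\bf y})$ holomorphic in all arguments near $(0,1;0)$ whose specialization at ${\bf y}=0$ is exactly the rational $\varphi$ determined by (\ref{ratrel}). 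Consequently $G_i(t,\tau;{\bf y})=F_i\bigl(\varphi(t,1+\tau;{\bf y}),t,1+\tau;{\bf y}\bigr)$, together with its partial derivatives $\partial_t G_i$ and $\partial_\tau G_i$, is holomorphic in $(t,\tau,{\bf y})$ near the origin.

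Next I would form, for each of the eight equations of systems (I) and (II), the difference $E(t,\tau;{\bf y})$ between its left- and right-hand sides evaluated on $(G_2,G_4,G_5,G_7)$; by the previous step each $E$ is holomorphic in $(t,\tau,{\bf y})$, and its right-hand side depends polynomially on $(y_4,y_6,y_8,y_{10})$. For every ${\bf y}\in B$ close to $0$ the point $(\varphi(0,1;{\bf y}),0,1)$ lies on $W$ with $\sigma_1\neq0$, so theorem \ref{dynamical} applies and yields $E(\cdot,\cdot;{\bf y})\equiv0$. Since $B=\mathbb{C}^6\backslash\Delta$ is the complement of a proper algebraic subset, it is dense, so each $E$ vanishes on a dense subset of a neighbourhood of ${\bf y}=0$; by continuity (equivalently, by the identity theorem for holomorphic functions) $E\equiv0$ throughout, in particular at ${\bf y}=0$. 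Putting ${\bf y}=0$ turns the right-hand sides into those of (I) and (II) with $y_4=y_6=y_8=y_{10}=0$, which is precisely the assertion.

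I expect the main obstacle to be the justification of holomorphic dependence up to ${\bf y}=0$: the geometric machinery underlying theorem \ref{dynamical}---the Abel--Jacobi map, $\mbox{Sym}^2(V_{\bf y})$, and lemma \ref{expression}---is valid only for ${\bf y}\in B$, since $Q(X)=X^7$ has multiple roots at ${\bf y}=0$. The point to emphasize is that this machinery is not needed at ${\bf y}=0$ itself: the sigma-function persists as an entire function with polynomial ${\bf y}$-dependence, so $F_i$, $\varphi$, and $G_i$ extend holomorphically across ${\bf y}=0$, and the singular limit of the curve enters only through this boundary value. A purely computational alternative---differentiating the explicit rational expressions for $F_i(\varphi,w_3,w_5)$ by means of the recurrence for $\varphi$ obtained from (\ref{ratrel}) and verifying the eight identities directly---is available, but far more laborious, and I would reserve it as a consistency check rather than as the main argument.
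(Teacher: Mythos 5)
Your proposal is correct and follows essentially the same route as the paper, which deduces theorem \ref{rationaldy} from theorem \ref{dynamical} by ``passing to the limit'' ${\bf y}\to 0$. You simply make explicit the details the paper leaves implicit: joint holomorphy of $\sigma$, $F_i$, $\varphi$, $G_i$ in $(w,{\bf y})$ via theorem \ref{rationallim}, the validity of theorem \ref{dynamical} for ${\bf y}\in B$ near $0$, and the density of $B$ together with the identity theorem to push the eight identities to ${\bf y}=0$.
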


This theorem follows from theorem \ref{dynamical},
because the parametric families of solutions obtained in the latter can be transformed into solutions of systems (I) and (II) with $y_4=y_6=y_8=y_{10}=0$ 
by passing to the limit.

\vspace{1ex}

\noindent{\bf Example 1.} Set $p=0$. Put $\psi_1(t)=\varphi(t,1)$.
The function $\psi_1(t)$ has the following expansion in a neighborhood of the point $t=0$
\[
\psi_1(t)=t^2+\frac{1}{3}t^7+\frac{44}{45}t^{12}+\cdots.
\]
According to theorem \ref{rationaldy}, the set of functions $(G_2,G_4,G_5,G_7)$, where
\[
G_i(t)=F_i(\psi_1(t),t,1),\;\;i=2,4,5,7,
\]
is a solution of the dynamical system (I) with $y_4=y_6=y_8=y_{10}=0$.

\vspace{1ex}

\noindent{\bf Example 2.} Set $p^5=-45$. Put $\psi_2(\tau)=\varphi(0,1+\tau)$.
Then $\psi_2(\tau)=p(1+\tau)^{1/5}$ and we have
\[
F_2(\psi_2(\tau),0,1+\tau)=-\frac{1}{30}p^3(1+\tau)^{-2/5},\quad F_4(\psi_2(\tau),0,1+\tau)=\frac{3}{20}p(1+\tau)^{-4/5},
\]
\[
F_5(\psi_2(\tau),0,1+\tau)=\frac{1}{5}(1+\tau)^{-1},\quad F_7(\psi_2(\tau),0,1+\tau)=-\frac{1}{50}p^3(1+\tau)^{-7/5}.
\]
According to theorem \ref{rationaldy}, the set of functions $(G_2,G_4,G_5,G_7)$, where
\[
G_i(\tau)=F_i(\psi_2(\tau),0,1+\tau),\;\;i=2,4,5,7,
\]
is a solution of the dynamical system (II) with $y_4=y_6=y_8=y_{10}=0$.

\vspace{1ex}

\noindent{\bf Example 3.}  Let $q\in\mathbb{C}$ be such that $q^6=15q^3+45$. 
Then $(q(1+t)^{1/3},1+t,0)\in W$ and $\sigma_1(q(1+t)^{1/3},1+t,0)=q^2(\frac{2}{15}q^3-1)(1+t)^{5/3}$.
We have
\[
F_2(q(1+t)^{1/3},1+t,0)=\frac{q}{6}(1+t)^{-2/3},\quad F_4(q(1+t)^{1/3},1+t,0)=
-\frac{q^2(q^3+15)}{108(q^3+3)}(1+t)^{-4/3},
\]
\[
F_5(q(1+t)^{1/3},1+t,0)=\frac{q}{9(8q^3+21)}(1+t)^{-5/3},
\]
\[
F_7(q(1+t)^{1/3},1+t,0)=-\frac{q^2(287q^3+750)}{1458(7q^3+18)}(1+t)^{-7/3}.
\]
According to theorem \ref{rationaldy}, the set of functions $(G_2,G_4,G_5,G_7)$, where
\[
G_i(t)=F_i(q(1+t)^{1/3},1+t,0),\;\;i=2,4,5,7,
\]
is a solution of the dynamical system (I) with $y_4=y_6=y_8=y_{10}=0$.

\vspace{2ex}

{\bf Acknowledgment.} The authors are grateful to V. Z. Enolskii for a useful discussion of the results of this paper and references.

\end{document}